\documentclass[11pt]{amsart}


\usepackage{amsfonts,latexsym}
\usepackage{amsmath}
\usepackage{amstext}
\usepackage{amssymb}
\usepackage{color}
\usepackage{graphicx}
\graphicspath{{figures/}}




\newcommand{\R}{\mathbb{R}}



\topmargin  = 0.0 in
\leftmargin = 0.9 in
\rightmargin = 1.0 in
\evensidemargin = -0.10 in
\oddsidemargin =  0.10 in
\textheight = 8.5 in
\textwidth  = 6.6 in
\setlength{\parskip}{2mm}
\setlength{\parindent}{0mm}


\newtheorem{theorem}{Theorem}[section]
\newtheorem{lemma}[theorem]{Lemma}
\newtheorem{proposition}[theorem]{Proposition}

\theoremstyle{definition}
\newtheorem{definition}[theorem]{Definition}

\theoremstyle{remark}
\newtheorem{remark}[theorem]{Remark}

\numberwithin{equation}{section}


\begin{document}

\title[Polytopic Invariants for Polynomial Dynamical Systems]
      {Computation of Polytopic Invariants for Polynomial Dynamical Systems using Linear Programming}
\thanks{This work was supported by the Agence Nationale de la Recherche (VEDECY project - ANR 2009 SEGI 015 01).}


\author[Mohamed Amin Ben Sassi]{Mohamed Amin Ben Sassi}
\address{Laboratoire Jean Kuntzmann \\
Universit\'e de Grenoble \\
B.P. 53, 38041 Grenoble, France}
\email{Mohamed-Amin.Bensassi@imag.fr}

\author[Antoine Girard]{Antoine Girard}
\address{Laboratoire Jean Kuntzmann \\
Universit\'e de Grenoble \\
B.P. 53, 38041 Grenoble, France} \email{Antoine.Girard@imag.fr}


\maketitle


\begin{abstract}
This paper deals with the computation of polytopic invariant sets for polynomial dynamical systems.
An invariant set of a dynamical system is a subset of the state space such that if the state of the system belongs to the set at a given instant, 
it will remain in the set forever in the future. Polytopic invariants for polynomial systems can be verified by solving a set of optimization problems involving multivariate polynomials on bounded polytopes. 
Using the blossoming principle together with properties of multi-affine functions on rectangles and Lagrangian duality, we show that certified lower bounds of the optimal values of such optimization problems can be computed effectively using linear programs. 
This allows us to propose a method based on linear programming for verifying polytopic invariant sets of polynomial dynamical systems. Additionally,  
using sensitivity analysis of linear programs, one can iteratively compute a polytopic invariant set. 
Finally, we show using a set of examples borrowed from biological applications, that our approach is effective in practice.
\end{abstract}

\section{Introduction}

An invariant set of a dynamical system is a subset of the state space such that if the state of the system belongs to the set at a given instant, 
it will remain in the set forever in the future. Invariant sets are fundamental notions in dynamical systems theory where they can serve to prove the
existence of attractors (e.g. in the Poincar\'e Bendixon theorem~\cite{Sastry}); they have played an important role
in control theory for the analysis of performance, robustness or practical stability~\cite{blanchini99}. 
They are also of great interest for 
reachability analysis of continuous and hybrid systems, especially for verification of safety properties where the goal is to prove that trajectories of a  system starting from a given set of initial states will never reach a specified set of unsafe states~\cite{Alur2011}.
This can be done by exhibiting an invariant set, containing the set of initial states, and whose intersection with the set of unsafe states is empty. For polynomial dynamics, these invariants are often given by semi-algebraic sets~\cite{Prajna07,Platzer,Sankaranarayanan10}.
However, when the computation of an invariant set is part of a bigger process such as controller synthesis or safety verification, it is sometimes preferable
to have invariants given by polytopes that are easier to manipulate~\cite{Alessio2007,Sankaranarayanan2008}. 
For instance, for specific classes of polynomial systems such as multi-affine or quasi multi-affine systems, methods to obtain rectangular 
invariants have been developed in~\cite{Belta06,Abate09}. 

In this paper, we deal with the computation of polytopic invariant sets of polynomial dynamical systems. Let us remark that rectangles form a subclass  of polytopic invariants and in some sense, our work extends the work of~\cite{Belta06,Abate09}. 
More precisely, we shall consider a dynamical system of the form:
\begin{equation}
\label{eq:ode}
\dot x(t) = f(x(t)),\; x(t) \in \R^n
\end{equation}
where $f:\R^n \rightarrow \R^n$ is a polynomial vector field. We consider the dynamics of (\ref{eq:ode}) only on a bounded rectangle $R$ of the state
space $\R^n$;
given a bounded polytope $P\subseteq R$ with a set of facets $\{F_k|\; k\in K\}$, it follows from the standard characterization of invariant sets (see e.g.~\cite{aubin1991}) that $P$ is invariant for the dynamical system~(\ref{eq:ode}) if and only if
\begin{equation}
\label{eq:car}
\forall k\in K,\; \forall x\in F_k,\; a_k\cdot f(x) \le 0
\end{equation}
where $a_k$ is the normal vector to $F_k$ pointing outside $P$. As pointed out in~\cite{Abate09} and by application of Tarski's Theorem~\cite{tarski1948}, this a decidable problem. However, the complexity of the decision procedure gives little hope for practical application. Let us remark that (\ref{eq:car}) can be reformulated as follows:
\begin{equation}
\label{eq:car2}
\forall k\in K,\; \min_{x\in F_k} -a_k\cdot f(x) \ge 0.
\end{equation}
This consists in showing that the minimal values of the multivariate polynomials  $-a_k\cdot f$ on the bounded polytopes $F_k$ are positive. Hence, if we are able to compute non-negative certified lower bounds of these minimal values, it is sufficient to prove that the polytope $P$ is invariant for the dynamical system~(\ref{eq:ode}). 

In this paper, we establish linear programming (LP) relaxations of the optimization problems in (\ref{eq:car2}). 
The main tool we use is the blossoming principle (see e.g.~\cite{Seidel1993}) that essentially maps the set of polynomials to the set of symmetric multi-affine functions. 
The blossoming principle together with properties of multi-affine maps on bounded rectangles allows us to derive linear programs which makes it possible to compute
lower bounds of the minimal values in (\ref{eq:car2}). Our approach is conservative (the lower bound is not tight) but it is effective and may be sufficient for proving invariance of a polytope. 
Additionally, we will show how one can iteratively compute a polytopic invariant set using sensitivity analysis of linear programs. 
Finally, we show using a set of examples borrowed from biological applications, that our approach is effective in practice.

\section{Preliminaries}

In this section, we introduce notations and preliminary results that will be useful for subsequent discussions.

\subsection{Multi-affine functions}

Multi-affine functions form a particular class of multivariate polynomials.
Essentially, a multi-affine function is a function which is affine in each of its variables when the other variables are regarded as constant:
\begin{definition} A multi-affine function $p:\R^n \rightarrow \R$ is a multivariate polynomial in the variables $x_1,\dots,x_n$ where the degree of $p$ in each of the variable is at most $1$. For $x=(x_1,\dots,x_n)$,
$$
p(x)=\sum_{(l_1,\dots,l_n)\in \{0,1\}^n } p_{l_1,\dots,l_n} x_1^{l_1}\dots x_n^{l_n}
$$
where $p_{l_1,\dots,l_n} \in \R$ for all $(l_1,\dots,l_n)\in \{0,1\}^n$.
\end{definition}

Let $R=\prod_{k=1}^{k=n}[\underline{x}_k,\overline{x}_k]$ be a rectangle of $\R^n$, with $\underline{x}_k<\overline{x}_k$, for all $k\in \{1,\dots, n\}$; the set of vertices of $R$ is 
$V=\prod_{k=1}^{k=n}\{\underline{x}_k,\overline{x}_k\}$.
It is shown in~\cite{Belta06} that a multi-affine function $p$ is uniquely determined by its values at the vertices of a rectangle $R$. Moreover, for all $x\in R$, $p(x)$ is a convex combination of the values at the vertices, that is $p(R)\subseteq CH(\{p(v) |\; v\in V\})$ where $CH(S)$ denotes the convex hull of the set $S$. Let us remark that generally $p(R)$ is not convex and  $P(R) \ne CH(\{p(v) |\; v\in V\})$. Then, we have the following result:
\begin{lemma}\label{lem:conv}
Let $p$ be a multi-affine function and $R$ a rectangle with set of vertices $V$, then
$
\displaystyle{\min_{x\in R} p(x)= \min_{v\in V} p(v).}
$
\end{lemma}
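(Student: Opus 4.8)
The plan is to deduce the lemma directly from the convex-hull property recalled just before the statement, namely that $p(R)\subseteq CH(\{p(v)\mid v\in V\})$. First I would observe that the right-hand side is the convex hull of a \emph{finite} set of real numbers, hence it is exactly the closed interval $[\min_{v\in V}p(v),\ \max_{v\in V}p(v)]$. Consequently every value $p(x)$ with $x\in R$ satisfies $p(x)\ge \min_{v\in V}p(v)$, which gives the inequality $\min_{x\in R}p(x)\ge \min_{v\in V}p(v)$. For the reverse inequality I would simply note that $V\subseteq R$, so the minimization over $R$ ranges over a superset of $V$ and therefore $\min_{x\in R}p(x)\le \min_{v\in V}p(v)$. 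Combining the two inequalities yields the claimed equality; since $p$ is continuous and $R$ compact the minimum over $R$ is attained (in fact at a vertex), so there is no issue of infimum versus minimum.

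Alternatively, and perhaps more in the spirit of making the argument self-contained, I would give a short induction on the dimension $n$. For $n=1$ a multi-affine function is affine, hence monotone on $[\underline{x}_1,\overline{x}_1]$, so its minimum is attained at one of the two endpoints, which are exactly the vertices. For the inductive step, fix $x_2,\dots,x_n$ and regard $p$ as an affine function of $x_1$ alone; its minimum over $[\underline{x}_1,\overline{x}_1]$ is attained at $x_1=\underline{x}_1$ or $x_1=\overline{x}_1$. Hence $\min_{x\in R}p(x)=\min\{\min_{x'\in R'}p(\underline{x}_1,x'),\ \min_{x'\in R'}p(\overline{x}_1,x')\}$ where $R'=\prod_{k=2}^{n}[\underline{x}_k,\overline{x}_k]$. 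Each of the two functions $x'\mapsto p(\underline{x}_1,x')$ and $x'\mapsto p(\overline{x}_1,x')$ is multi-affine on $R'$, so by the induction hypothesis each of these minima is attained at a vertex of $R'$; the union of such points over the two choices of $x_1$ is precisely the vertex set $V$ of $R$, which closes the induction.

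I expect essentially no obstacle here: the only point that deserves a word is that the convex hull of finitely many scalars is an interval whose left endpoint equals the minimum of those scalars — a triviality — and, in the inductive variant, that the restriction of a multi-affine function to a face obtained by fixing one coordinate is again multi-affine in the remaining coordinates, which is immediate from the definition. I would present the convex-hull proof as the main argument, since Lemma~\ref{lem:conv} is stated right after that property is recalled, and mention the induction only as an aside.
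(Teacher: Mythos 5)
Your main argument is correct and is exactly the one the paper intends: the lemma is stated immediately after recalling $p(R)\subseteq CH(\{p(v)\mid v\in V\})$, and the paper leaves the deduction (convex hull of finitely many reals is the interval with left endpoint $\min_{v\in V}p(v)$, plus $V\subseteq R$) implicit. Your inductive variant is a valid self-contained alternative, but the convex-hull route you lead with is the paper's approach.
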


The previous result shows that optimizating a multi-affine function $p$ over a rectangle $R$ only requires finding the minimal value of $p$ at the vertices of $R$.

\subsection{Blossoming principle}
\label{sec:blossom}
Let
$p:\R^n \rightarrow \R$ be an arbitrary multivariate polynomial function,
let $\delta_1,\dots,\delta_n$ denote the degree of $p$ in the variables $x_1,\dots,x_n$ respectively. 
Let $\Delta=\{0,\dots,\delta_1\} \times \dots \times \{0,\dots,\delta_n\}$, then $p(x)$ can be written under the form:
$$
p(x)=\sum_{(l_1,\dots,l_n)\in \Delta } p_{l_1,\dots,l_n} x_1^{l_1}\dots x_n^{l_n}
$$
where $p_{l_1,\dots,l_n} \in \R$ for all $(l_1,\dots,l_n)\in \Delta$.
We now present the blossoming principle which consists in mapping polynomials to symmetric multi-affine functions. Blossoms have been developed in the area of computer aided geometric design where they have numerous applications, most notably for spline curves and surfaces. 
As for our problem, the blossoming principle allows us to recast the optimization of polynomial functions as the optimization of a multi-affine functions for which we can use the fundamental property presented in Lemma~\ref{lem:conv}. 
All the results in this section are quite standard (see~\cite{Seidel1993} and references therein) and are therefore stated without proofs.
\begin{definition}\label{def:blossom} The blossom or polar form of the polynomial $p:\R^n \rightarrow \R$ is the function 
$q:\R^{\delta_1+\dots+\delta_n} \rightarrow \R$
given for $z=(z_{1,1},\dots,z_{1,\delta_1},\dots,z_{n,1},\dots,z_{n,\delta_n})$ by
$$
q(z)=\sum_{(l_1,\dots,l_n)\in \Delta } p_{l_1,\dots,l_n} \prod_{i=1}^{i=n} B_{l_i,\delta_i}(z_{i,1},\dots,z_{i,\delta_i})
$$
with
$$
B_{l,\delta}(z_1,\dots,z_\delta) = \frac{1}{\left(\begin{smallmatrix} \delta \\ l \end{smallmatrix} \right)} \sum_{\sigma \in C(l,\delta)} z_{\sigma_1} \dots z_{\sigma_l}
$$
where $C(l,\delta)$ denotes the set of combinations of $l$ elements in  $\{1,\dots,\delta\}$.
\end{definition}

An example may help to understand the definition;
the blossom of the polynomial $p(x)=3x_1+2x_2^3+x_1^2x_2^2$ is 
$$
\begin{array}{lcl}
q(z) &=& \frac{3}{2} (z_{1,1}+z_{1,2})+ 2 z_{2,1}z_{2,2}z_{2,3}\\
&&  + \frac{1}{3} z_{1,1}z_{1,2}(z_{2,1}z_{2,2}+z_{2,1}z_{2,3}+z_{2,2}z_{2,3}).
\end{array}
$$

We define a relation on $\R^{\delta_1+\dots+\delta_n}$:
for $z,z' \in \R^{\delta_1+\dots+\delta_n}$, with 
$z=(z_{1,1},\dots,z_{1,\delta_1},\dots,z_{n,1},\dots,z_{n,\delta_n})$  and 
$z'=(z'_{1,1},\dots,z'_{1,\delta_1},$ $\dots,z'_{n,1},\dots,z'_{n,\delta_n}),$ 
we denote
$z \cong z'$ if, for all $k=1,\dots,n$, there exists a permutation $\pi_k$
such that $(z_{k,1},\dots,z_{k,\delta_k})= \pi_k(z'_{k,1},\dots,z'_{k,\delta_k})$.
It is easy to see that $\cong$ is an equivalence relation. 
A characterization of blossoms that is equivalent to Definition~\ref{def:blossom} is given by the following proposition:
\begin{proposition}\label{pro:blossom}
$q:\R^{\delta_1+\dots+\delta_n} \rightarrow \R$ is the blossom of the polynomial $p:\R^n \rightarrow \R$ if and only if:
\begin{enumerate}
\item $q$ is a multi-affine function;

\item $q$ is a symmetric function of its arguments: 
$$ 
\forall z\cong z',\; q(z)=q(z');
$$

\item $q$ satisfies the diagonal property:
$$
q(z_1,\dots,z_1,\dots,z_n,\dots,z_n) = p(z_1,\dots,z_n).
$$
\end{enumerate}
\end{proposition}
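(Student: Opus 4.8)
The plan is to prove the two implications separately: first that the blossom $q$ from Definition~\ref{def:blossom} satisfies properties (1)--(3), and then that these three properties characterize $q$ uniquely, so that any function satisfying them must coincide with the blossom.

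For the forward direction, I would examine each of the building blocks $B_{l,\delta}(z_1,\dots,z_\delta)$. Property (1): each $B_{l,\delta}$ is a sum of squarefree monomials $z_{\sigma_1}\cdots z_{\sigma_l}$ with distinct indices, so it is affine in each $z_j$; since $q$ is an $\R$-linear combination of products $\prod_i B_{l_i,\delta_i}(z_{i,1},\dots,z_{i,\delta_i})$ in disjoint blocks of variables, $q$ is multi-affine. Property (2): $B_{l,\delta}$ is manifestly symmetric in $z_1,\dots,z_\delta$ because it is (a normalization of) the $l$-th elementary symmetric polynomial in those variables; hence $q$ is invariant under permutations within each block, i.e. $z\cong z' \Rightarrow q(z)=q(z')$. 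Property (3): setting $z_{i,1}=\dots=z_{i,\delta_i}=z_i$ for each $i$ collapses $B_{l_i,\delta_i}(z_i,\dots,z_i)$ to $\binom{\delta_i}{l_i}^{-1}\binom{\delta_i}{l_i}z_i^{l_i}=z_i^{l_i}$, so $q$ restricts to $\sum_{(l_1,\dots,l_n)\in\Delta}p_{l_1,\dots,l_n}z_1^{l_1}\cdots z_n^{l_n}=p(z_1,\dots,z_n)$.

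For the converse, suppose $\tilde q:\R^{\delta_1+\dots+\delta_n}\to\R$ satisfies (1)--(3). A multi-affine function on $\delta_1+\dots+\delta_n$ variables is uniquely determined by its $2^{\delta_1+\dots+\delta_n}$ coefficients, one for each squarefree monomial; symmetry within each block forces these coefficients to depend only on how many variables from each block $i$ are selected, i.e. $\tilde q$ is an $\R$-linear combination of the products $\prod_i B_{l_i,\delta_i}(z_{i,1},\dots,z_{i,\delta_i})$ with $(l_1,\dots,l_n)$ ranging over $\{0,\dots,\delta_1\}\times\dots\times\{0,\dots,\delta_n\}=\Delta$ (the normalization is just a convenient choice of basis). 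Write $\tilde q = \sum_{(l_1,\dots,l_n)\in\Delta} c_{l_1,\dots,l_n}\prod_i B_{l_i,\delta_i}(\cdot)$. Evaluating on the diagonal and using property (3) together with the collapse computation above gives $\sum_{(l_1,\dots,l_n)\in\Delta} c_{l_1,\dots,l_n}z_1^{l_1}\cdots z_n^{l_n} = p(z_1,\dots,z_n) = \sum_{(l_1,\dots,l_n)\in\Delta} p_{l_1,\dots,l_n}z_1^{l_1}\cdots z_n^{l_n}$; matching monomial coefficients (the monomials $z_1^{l_1}\cdots z_n^{l_n}$, $(l_1,\dots,l_n)\in\Delta$, are linearly independent) yields $c_{l_1,\dots,l_n}=p_{l_1,\dots,l_n}$ for all indices, so $\tilde q = q$.

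The main obstacle is the bookkeeping in the converse: one must argue cleanly that symmetry plus multi-affinity reduces the coefficient data to the blossom basis indexed by $\Delta$, and that the normalizing factors $\binom{\delta_i}{l_i}^{-1}$ are accounted for correctly when collapsing to the diagonal. Everything else is a direct verification. I would state the key reductions (multi-affine $\Leftrightarrow$ determined by values on $\{0,1\}$-vertices, symmetric $\Rightarrow$ coefficients depend only on block-wise selection counts) as brief lemmas or inline remarks rather than belabor them, since they are standard.
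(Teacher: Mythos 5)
The paper does not actually prove Proposition~\ref{pro:blossom}: it is stated without proof as a standard fact about polar forms, with a pointer to the blossoming literature. Your argument is correct and complete, and it is exactly the standard existence-plus-uniqueness proof: the explicit formula in Definition~\ref{def:blossom} verifies (1)--(3) because each $B_{l,\delta}$ is a normalized elementary symmetric polynomial that collapses to $z^{l}$ on the diagonal, and conversely multi-affinity plus block-wise symmetry pins a candidate down to a linear combination of the products $\prod_i B_{l_i,\delta_i}$, whose coefficients are then forced by the diagonal property and the linear independence of the monomials $z_1^{l_1}\cdots z_n^{l_n}$. The only steps worth writing out in full if you were to include this in the paper are the two reductions you already flag (linear independence of the squarefree monomials, and the fact that within-block symmetry forces coefficients to depend only on the selection counts $(l_1,\dots,l_n)$); both are routine.
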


Let $R=\prod_{k=1}^{k=n}[\underline{x}_k,\overline{x}_k]$ be a rectangle of $\R^n$, with $\underline{x}_k<\overline{x}_k$, for all $k\in \{1,\dots, n\}$; we define the associated rectangle of $\R^{\delta_1+\dots+\delta_n}$ defined as $R'=\prod_{k=1}^{k=n}[\underline{x}_k,\overline{x}_k]^{\delta_k}$ and its set of vertices $V'=\prod_{k=1}^{k=n}\{\underline{x}_k,\overline{x}_k\}^{\delta_k}$. 
For $v = (v_{1,1},\dots,v_{1,\delta_1},\dots,v_{n,1},\dots,v_{n,\delta_n})\in V'$ and
$k\in \{1,\dots,n\}$, $l_k(v)$ denotes the number of elements $v_{k,1},\dots,v_{k,\delta_k}$ that are equal to 
$\overline{x}_k$. It is easy to verify that for $v,v'\in V'$, $v\cong v'$ if and only if $l_k(v)=l_k(v')$ for all 
$k\in  \{1,\dots,n\}$. We denote by $\overline{V}'=(V'/\cong)$
the set of equivalence classes of the relation $\cong$ on the set $V'$;
$\overline{V}'$ has $(\delta_1+1)\times \dots \times (\delta_n+1)$ elements. 

From the previous discussion, for $k\in  \{1,\dots,n\}$ and $\overline v\in \overline V'$,  
$l_k(v)$ has the same value for all $v \in \overline v$, with a slight abuse of notation we denote this value $l_k(\overline v)$.
Also from the second property in Proposition~\ref{pro:blossom},for  $\overline v\in \overline V'$,  
$q(v)$ has the same value for all $v \in \overline v$, let us denote this value $q(\overline v)$.


\begin{proposition}
\label{pro:bernstein}
The values $q(\overline v)$ for $\overline v\in \overline V'$
are the coordinates of the polynomial $p$ in the Bernstein basis:
$$
p(x) = \sum_{\overline v\in \overline V'} q(\overline v) \prod_{k=1}^{k=n} \mathcal B_{l_k(\overline v),\delta_k}
\left(\frac{x-\underline x_k}{\overline x_k - \underline x_k} \right)
$$
where $\mathcal B_{l,\delta}$ are the Bernstein polynomials
$$
\mathcal B_{l,\delta}(y) = \left(
\begin{smallmatrix}
\delta \\
l
\end{smallmatrix}
 \right) y^l (1-y)^{\delta-l}, \; l\in \{1,\dots,\delta\}.
$$
\end{proposition}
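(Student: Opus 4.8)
The plan is to derive the identity directly from the three-part characterization of the blossom in Proposition~\ref{pro:blossom}, combined with the tensor-product interpolation formula for multi-affine functions on a rectangle; the Bernstein polynomials will then appear simply as the result of grouping, over equivalence classes, the multilinear interpolation weights evaluated on the diagonal.

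First I would recall the interpolation formula that underlies the fact (from~\cite{Belta06}, quoted just before Lemma~\ref{lem:conv}) that a multi-affine function is determined by its values at the vertices of a rectangle. Applied to the blossom $q$, which by Proposition~\ref{pro:blossom} is multi-affine on $R'=\prod_{k=1}^n[\underline x_k,\overline x_k]^{\delta_k}$ with vertex set $V'$, it reads: for every $z=(z_{k,j})\in R'$,
\begin{equation*}
q(z)=\sum_{v\in V'} q(v)\prod_{k=1}^n\prod_{j=1}^{\delta_k}\mu_k^{v_{k,j}}(z_{k,j}),
\end{equation*}
where $\mu_k^{\overline x_k}(t)=\frac{t-\underline x_k}{\overline x_k-\underline x_k}$ and $\mu_k^{\underline x_k}(t)=\frac{\overline x_k-t}{\overline x_k-\underline x_k}$ are the barycentric coordinates on $[\underline x_k,\overline x_k]$. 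If one prefers not to invoke this formula as standard, it is checked in one line: the right-hand side is multi-affine in $z$, and it agrees with $q$ at every vertex of $R'$, since $\mu_k^{\overline x_k}(\overline x_k)=\mu_k^{\underline x_k}(\underline x_k)=1$ and $\mu_k^{\overline x_k}(\underline x_k)=\mu_k^{\underline x_k}(\overline x_k)=0$; and two multi-affine functions agreeing on all vertices coincide.

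Next I would specialize to the diagonal, setting $z_{k,j}=x_k$ for all $j\in\{1,\dots,\delta_k\}$ and all $k\in\{1,\dots,n\}$. The left-hand side becomes $p(x)$ by the diagonal property in Proposition~\ref{pro:blossom}. On the right-hand side, for a fixed $v\in V'$ and a fixed $k$, the block product $\prod_{j=1}^{\delta_k}\mu_k^{v_{k,j}}(x_k)$ depends on $v$ only through the number $l_k(v)$ of indices $j$ with $v_{k,j}=\overline x_k$, and equals $y_k^{\,l_k(v)}(1-y_k)^{\delta_k-l_k(v)}$ with $y_k=\frac{x_k-\underline x_k}{\overline x_k-\underline x_k}$ (note $1-y_k=\frac{\overline x_k-x_k}{\overline x_k-\underline x_k}$). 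Hence the whole summand for $v$ depends on $v$ only through the tuple $(l_1(v),\dots,l_n(v))$, i.e. only through the class $\overline v\in\overline V'$ of $v$; and $q(v)=q(\overline v)$ is likewise constant on each class by the symmetry of $q$.

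Finally I would regroup the sum over $v\in V'$ according to equivalence classes: the class $\overline v$ contains exactly $\prod_{k=1}^n\binom{\delta_k}{l_k(\overline v)}$ vertices, one for each choice of which indices in each block are assigned the value $\overline x_k$. Collecting terms yields
\begin{equation*}
p(x)=\sum_{\overline v\in\overline V'} q(\overline v)\prod_{k=1}^n\binom{\delta_k}{l_k(\overline v)}y_k^{\,l_k(\overline v)}(1-y_k)^{\delta_k-l_k(\overline v)}=\sum_{\overline v\in\overline V'} q(\overline v)\prod_{k=1}^n\mathcal B_{l_k(\overline v),\delta_k}\!\left(\frac{x-\underline x_k}{\overline x_k-\underline x_k}\right),
\end{equation*}
which is the asserted formula; and since the tensor products $\prod_k\mathcal B_{l_k,\delta_k}(y_k)$ form a basis of the space of polynomials of degree at most $\delta_k$ in $x_k$ (the familiar one-variable consequence of the binomial theorem, taken tensor-wise), the $q(\overline v)$ are genuinely the coordinates of $p$ in the Bernstein basis. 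I do not expect a real obstacle — the statement is classical — the only points demanding care being the bookkeeping that passes from sums over $V'$ to sums over $\overline V'$ (the class-cardinality count) and the justification of the interpolation formula to the desired level of rigor; an alternative that avoids interpolation is to evaluate $q$ at a vertex straight from Definition~\ref{def:blossom}, recognizing each factor $B_{l_i,\delta_i}$ there as a normalized elementary symmetric polynomial in the $\delta_i$ arguments $v_{i,1},\dots,v_{i,\delta_i}$, of which $l_i(v)$ equal $\overline x_i$ — but that path is more computational.
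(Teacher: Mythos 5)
Your proof is correct. Note that the paper itself supplies no argument for this proposition --- it is stated as a standard fact with a pointer to \cite{Seidel1993} --- so there is no in-paper proof to compare against; your derivation (tensor-product multilinear interpolation of the multi-affine blossom $q$ on $R'$, justified by the uniqueness of a multi-affine function given its vertex values, followed by restriction to the diagonal and regrouping of the $\prod_k\binom{\delta_k}{l_k(\overline v)}$ vertices in each $\cong$-class to produce the binomial factors of $\mathcal B_{l,\delta}$) is the standard route and is complete. The only bookkeeping points that needed care --- that the diagonal block product depends on $v$ only through $l_k(v)$, and the class-cardinality count --- are both handled explicitly, and your closing remark that the tensor Bernstein polynomials form a basis correctly upgrades the identity to the ``coordinates'' claim in the statement.
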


The previous result can be useful when one needs to compute the values $q(\overline v)$ for $\overline v \in \overline V'$.
The explicit computation of the blossom $q(z)$ (which can count up to 
$2^{\delta_1+\dots+\delta_n}$ terms) is computationally expensive. However,
Proposition~\ref{pro:bernstein} states that
 it is sufficient
to compute the $(\delta_1+1)\times \dots \times (\delta_n+1)$
coordinates of the polynomial $p(x)$ in the Bernstein basis. 
This can be done simply by solving a system of linear equations.

\section{LP Relaxations for Optimization of Polynomials on Polytopes}
\label{sec:opt}
As stated in the introduction, the verification of polytopic invariants for polynomial dynamical systems can be handled by solving a set of problems of optimization of multivariate polynomials on bounded polytopes given by (\ref{eq:car2}). Therefore, in this section, we consider the following problem:
\begin{equation}
\label{eq:opt}
\begin{array}{llr}
\text{min} & p(x)\\
\text{over} & x\in R, \\
\text{under} 
& a_i \cdot x \le b_i, & i\in I, \\
& c_j \cdot x = d_j, & j\in J.
\end{array}
\end{equation}
where $p:\R^n \rightarrow \R$ is a multivariate polynomial, 
$R$ is a rectangle of $\R^n$ with set of vertices $V$; 
$I=\{1,\dots,m_I\}$ and $J=\{1,\dots,m_J\}$ are sets of indices; $a_i\in \R^n$, $b_i\in \R$, for all $i\in I$ and $c_j\in \R^n$,  $d_j\in \R$, for all $j\in J$.
Let us remark that even though the polytope defined by the constraints indexed by $I$ and $J$ is unbounded in $\R^n$, the fact that we consider $x\in R$ which is a bounded rectangle of $\R^n$ results in an optimization problem on a bounded (not necessarily full dimensional) polytope of $\R^n$. 
We will assume that the problem is feasible: there exists $x\in R$ satisfying all the constraints.
Problems defined in (\ref{eq:car2}) can be recasted under the form (\ref{eq:opt}) with polynomial $p(x)=-a_k\cdot f(x)$ and linear inequality and equality constraints indexed  by $I$ and $J$  describing the facet $F_k$.

As the function $p$ is usually non-convex, this may be a non-trivial problem to solve.
Let us remark that as far as verification of polytopic invariants is concerned, we are not interested in computing the solution of problem~(\ref{eq:opt}) (i.e. $x^* \in R$ satisfying constraints and minimizing $p$).
Indeed, it is sufficient to compute the optimal value of (\ref{eq:opt}), that is $p^*=p(x^*)$, or at least a certified lower bound of the optimal value.
We will first show how this can be done if $p$ belongs to the particular class of multi-affine functions. Then, we will extend these results to arbitrary polynomial functions.

\subsection{Optimization of multi-affine functions}
\label{sec:multi-affine}

In the following, we show how to compute,  using linear programming, a certified lower bounded of the optimal value $p^*$ of (\ref{eq:opt}) where $p$ is a multi-affine function. The linear program is derived through Lagrangian duality.
We start by writing the Lagrangian of problem (\ref{eq:opt}):
$$
L(x,\lambda,\mu) = p(x)+\sum_{i\in I} \lambda_i (a_i \cdot x - b_i) + \sum_{j\in J} \mu_j (c_j \cdot x - d_j)
$$
where $x\in R$, $\lambda_i \ge 0$ for all $i\in I$, and $\mu_j\in \R$ for all $j\in J$.
Then, the dual formulation of problem~(\ref{eq:opt}) is 
\begin{equation}
\label{eq:dual}
\begin{array}{llr}
\text{max} & \displaystyle{\min_{x\in R} L(x,\lambda,\mu)}\\
\text{over} & \lambda\in \R^{m_I},\; \mu \in \R^{m_J},\\
\text{under} & \lambda_i \ge 0, & i\in I.
\end{array}
\end{equation}
Since (\ref{eq:opt}) is feasible, the optimal value of (\ref{eq:dual}) is bounded, it is denoted $d^* \in \R$.
It is well-known from duality theory  (see e.g.~\cite{boyd2004}) that we have $d^* \le p^*$. 
A multi-affine function is generally non-convex; then, we cannot expect strong duality (i.e.~$d^*=p^*$) in general. The following result shows that
 problem (\ref{eq:dual}) can be recasted as a linear program:
\begin{proposition}\label{pro:dual} Let $p:\R^n \rightarrow \R$ be a multi-affine function, the dual of problem (\ref{eq:opt}) is equivalent to the linear program:
\begin{equation}
\label{eq:dual1}
\begin{array}{lll}
\text{max} & t\\
\text{over} & t\in \R,\; \lambda\in \R^{m_I},\; \mu \in \R^{m_J},\\
\text{under} &\lambda_i \ge 0, & i\in I, \\ 
&t \le p(v)+\displaystyle{\sum_{i\in I} \lambda_i (a_i \cdot v - b_i)} \\ & 
\hspace{0.5cm}+ \displaystyle{\sum_{j\in J} \mu_j (c_j \cdot v - d_j)}, & v\in V.
\end{array}
\end{equation}
\end{proposition}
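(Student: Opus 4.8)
The plan is to unfold the inner minimization in the dual problem~(\ref{eq:dual}) using the multi-affine structure of $p$. For fixed multipliers $\lambda \in \R^{m_I}$ with $\lambda_i \ge 0$ and $\mu \in \R^{m_J}$, the Lagrangian $x \mapsto L(x,\lambda,\mu)$ is a sum of the multi-affine function $p$ and an affine function of $x$; since an affine function is in particular multi-affine, and the sum of two multi-affine functions is multi-affine, $x \mapsto L(x,\lambda,\mu)$ is itself a multi-affine function of $x$. Therefore Lemma~\ref{lem:conv} applies on the rectangle $R$ with vertex set $V$, giving
$$
\min_{x\in R} L(x,\lambda,\mu) = \min_{v\in V} L(v,\lambda,\mu) = \min_{v\in V}\left( p(v) + \sum_{i\in I} \lambda_i (a_i\cdot v - b_i) + \sum_{j\in J} \mu_j (c_j\cdot v - d_j)\right).
$$

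Next I would substitute this identity into~(\ref{eq:dual}), so that the dual objective becomes $\max_{\lambda,\mu} \min_{v\in V} L(v,\lambda,\mu)$ over the feasible set $\lambda_i \ge 0$. The standard epigraph reformulation then introduces a scalar variable $t \in \R$ representing the value of the inner minimum: maximizing $\min_{v\in V} L(v,\lambda,\mu)$ is equivalent to maximizing $t$ subject to $t \le L(v,\lambda,\mu)$ for every $v \in V$ (the finitely many vertices of $R$). Writing out $L(v,\lambda,\mu)$ explicitly yields exactly the constraints of~(\ref{eq:dual1}). Since $p(v)$, $a_i\cdot v - b_i$, and $c_j\cdot v - d_j$ are constants once $v$ is fixed, each such constraint is linear in $(t,\lambda,\mu)$, and together with $\lambda_i \ge 0$ this is a linear program whose optimal value equals $d^*$.

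The only point requiring a little care is the equivalence between $\max \min_{v\in V} L(v,\cdot,\cdot)$ and the epigraph LP: one must check that at optimum $t$ actually attains $\min_{v\in V} L(v,\lambda,\mu)$ rather than being strictly smaller, which follows because $t$ appears only through the upper-bound constraints and the objective pushes it up, so the optimal $t$ equals the minimum over $v$ of the right-hand sides; and one should note the maximum is attained (equivalently $d^*$ is finite) because, as already observed in the text preceding the proposition, feasibility of~(\ref{eq:opt}) forces $d^* \le p^* < \infty$, while $d^*$ is trivially bounded below. I expect no real obstacle here; the main content is simply the application of Lemma~\ref{lem:conv} to reduce the semi-infinite inner minimum over $R$ to a finite minimum over $V$, after which the reformulation is routine.
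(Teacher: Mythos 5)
Your proposal is correct and follows essentially the same route as the paper: observe that $L(\cdot,\lambda,\mu)$ is multi-affine in $x$, invoke Lemma~\ref{lem:conv} to replace the minimum over $R$ by the minimum over $V$, and then apply the standard epigraph reformulation of a max--min over finitely many affine functions. Your additional remarks on why the epigraph variable $t$ attains the inner minimum and on boundedness of $d^*$ are correct details that the paper leaves implicit.
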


\begin{proof} Let us remark that the Lagrangian $L(x,\lambda,\mu)$ is a multi-affine function of $x$. Then, it follows from Lemma~\ref{lem:conv} that 
the minimum of $L(x,\lambda,\mu)$ over $x\in R$ is reached at one of vertices of $R$:
$$ 
\min_{x\in R} L(x,\lambda,\mu) = \min_{v\in V}  L(v,\lambda,\mu).
$$
Then, problem (\ref{eq:dual}) consist in optimizing a piecewise linear function under a set of linear constraints which it is straightforward (see~\cite{boyd2004}, pp 150-151) to formulate as the linear program~(\ref{eq:dual1}).
\end{proof}


We have presented a simple approach to compute a certified lower bound of the solution $p^*$ of~(\ref{eq:opt}). Though conservative, our approach relies on linear programming and is therefore effective.

\subsection{Optimization of polynomial functions}
\label{sec:optpol}
We consider problem (\ref{eq:opt}) where $p:\R^n \rightarrow \R$ is now
an arbitrary multivariate polynomial function.
We use the blossoming principle to define a problem equivalent to (\ref{eq:opt}) and involving $q$ the blossom of $p$.
We use the same notations as in section~\ref{sec:blossom}. Then, from the third property in Proposition~\ref{pro:blossom}, problem~(\ref{eq:opt}) is  equivalent to
\begin{equation}
\label{eq:opt1}
\begin{array}{lll}
\text{min} & q(z)\\
\text{over} & z\in R', \\
\text{under} 
& {a_i}' \cdot z \le b_i, & i\in I, \\
& {c_j}' \cdot z = d_j, & j\in J,\\
& e_{k,l} \cdot z = 0,& k\in\{1,\ldots,n \},\\
&& l \in \{1,\ldots,\delta_{k}-1 \}.
\end{array}
\end{equation}
where $a_i'={(\frac{a_{i,1}}{\delta_1},\dots,\frac{a_{i,1}}{\delta_1},\dots,\frac{a_{i,n}}{\delta_n},\dots,\frac{a_{i,n}}{\delta_n})}$, for $i\in I$;
$c_j' ={(\frac{c_{j,1}}{\delta_1},\dots,\frac{c_{j,1}}{\delta_1},\dots,\frac{a_{j,n}}{\delta_n},\dots,\frac{a_{j,n}}{\delta_n})}$, for $j\in J$; 
and $e_{k,l} \in \R^{\delta_1+\dots+\delta_n}$ are the vectors such that
$e_{k,l}\cdot z = z_{k,l}- z_{k,l+1}$, for all $z\in \R^{\delta_1+\dots+\delta_n}$, 
$k\in\{1,\ldots,n \}$, $l~\in~\{1,\ldots,\delta_{k}-1 \}$.

Now, since the blossom of a multivariate polynomial is a multi-affine function, we can remark that problem
$(\ref{eq:opt1})$ is similar to those considered in Section~\ref{sec:multi-affine}.
Then, we can use Proposition~\ref{pro:dual} to obtain its dual, which is given by the following linear program:
\begin{equation}
\label{eq:dual11}
\begin{array}{lll}
\text{max} & t\\
\text{over} & t\in \R,\; \lambda\in \R^{m_I},\; \mu \in \R^{m_J},\\
& \alpha \in \R^{(\delta_1-1)+\dots+(\delta_n-1)},\\
\text{under} &\lambda_i \ge 0, & i\in I, \\ 
&t \le q(v)+\displaystyle{\sum_{i\in I} \lambda_i (a_i' \cdot v - b_i)}\\
&  \hspace{0.5cm} + \displaystyle{\sum_{j\in J} \mu_j (c_j' \cdot v - d_j)} \\
&  \hspace{0.5cm} + \displaystyle{\sum_{k\in\{1,\ldots,n\}}{\sum_{l\in\{1,\ldots,\delta_k-1\}} \alpha_{k,l} (e_{k,l} \cdot v )}}, & v\in V'.\\
\end{array}
\end{equation}
By Proposition~\ref{pro:dual}, the optimal value of this linear program gives a certified lower bound of the optimal value
of the polynomial optimization problem (\ref{eq:opt}). However, we shall not solve directly the linear program (\ref{eq:dual11}) 
as the equivalence relation $\cong$ defined in section~\ref{sec:blossom} can be used to exploit the symmetries and reduce the complexity of the linear program. Let us remark that for all $v\cong v'$, $i\in I$ and $j\in J$,
$a_i'\cdot v = a_i'\cdot v'$ and $c_j'\cdot v = c_j'\cdot v'$.
Then, for  $\overline v\in \overline V'$,  
$a_i'\cdot v$ (respectively $c_j'\cdot v$) has the same value for all $v \in \overline v$, let us denote this value 
$a_i'\cdot \overline{v}$ (respectively $c_j'\cdot \overline{v}$).

\begin{theorem}
\label{th:dual21}
Let $p:\R^n \rightarrow \R$ be a polynomial and $q:\R^{\delta_1+\dots+\delta_n} \rightarrow \R$ 
its blossom. 
The optimal value of the linear program (\ref{eq:dual11}) is equal to the optimal value $d^*$ of:\\
\begin{equation}
\label{eq:dual21}
\begin{array}{lll}
\text{max} & t\\
\text{over} & t\in \R,\; \lambda\in \R^{m_I},\; \mu \in \R^{m_J},\\
\text{under} &\lambda_i \ge 0, & i\in I, \\ 
&t \le q(\overline{v})+\displaystyle{\sum_{i\in I} \lambda_i (a_i' \cdot \overline{v} - b_i)} \\
& \hspace{0.5cm}       + \displaystyle{\sum_{j\in J} \mu_j (c_j' \cdot \overline{v} - d_j)}, & \overline{v}\in \overline{V}'.
\end{array}
\end{equation}
Moreover, $d^* \le p^*$ where $p^*$ is the optimal value of (\ref{eq:opt}).
\end{theorem}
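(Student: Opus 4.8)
The plan is to reduce the linear program (\ref{eq:dual11}) to (\ref{eq:dual21}) by exploiting the symmetry of the blossom $q$ under the equivalence relation $\cong$, and then invoke the duality inequality already established. The final claim $d^* \le p^*$ will then be immediate from Proposition~\ref{pro:dual} (via Proposition~\ref{pro:blossom}(3), which makes (\ref{eq:opt1}) equivalent to (\ref{eq:opt})), so the real content is showing that the two linear programs have the same optimal value.

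First I would argue that (\ref{eq:dual21}) is a relaxation of (\ref{eq:dual11}), in the sense that its feasible set projects onto a larger set: given any feasible $(t,\lambda,\mu)$ for (\ref{eq:dual21}), I claim $(t,\lambda,\mu,\alpha)$ with $\alpha = 0$ is feasible for (\ref{eq:dual11}). Indeed, for each equivalence class $\overline v \in \overline V'$ pick a representative; since $e_{k,l}\cdot v = v_{k,l} - v_{k,l+1}$ and the constraints $e_{k,l}\cdot z = 0$ force all coordinates in each block to be equal, one should check that on the vertices $v\in V'$ the term $\sum_{k,l}\alpha_{k,l}(e_{k,l}\cdot v)$ with $\alpha=0$ contributes nothing, and that $q(v)=q(\overline v)$, $a_i'\cdot v = a_i'\cdot\overline v$, $c_j'\cdot v = c_j'\cdot\overline v$ hold for every $v\in\overline v$ by the symmetry of $q$ and of $a_i',c_j'$ noted just before the theorem. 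Hence each constraint of (\ref{eq:dual11}) indexed by $v\in V'$ collapses to the corresponding constraint of (\ref{eq:dual21}) indexed by $\overline v$, so feasibility transfers and the optimal value of (\ref{eq:dual11}) is $\ge$ that of (\ref{eq:dual21}).

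For the reverse inequality I would take an optimal $(t^\star,\lambda^\star,\mu^\star,\alpha^\star)$ for (\ref{eq:dual11}) and show that we may assume $\alpha^\star = 0$ without decreasing $t^\star$. The key observation is that $V'$ is a product of two-point sets over the $\delta_1+\dots+\delta_n$ coordinates, so the term $\sum_{k,l}\alpha_{k,l}(e_{k,l}\cdot v) = \sum_{k,l}\alpha_{k,l}(v_{k,l}-v_{k,l+1})$ is a telescoping-type sum over each block; averaging the constraint of (\ref{eq:dual11}) over all $v$ in a fixed equivalence class $\overline v$, the symmetric quantities $q(v), a_i'\cdot v, c_j'\cdot v$ are constant on $\overline v$, while the average of $\sum_{k,l}\alpha_{k,l}(e_{k,l}\cdot v)$ over the class is zero because, within a block of length $\delta_k$ having $l_k(\overline v)$ coordinates equal to $\overline x_k$, each position takes value $\overline x_k$ in exactly the same fraction $l_k(\overline v)/\delta_k$ of the class members, so consecutive positions $v_{k,l}$ and $v_{k,l+1}$ have equal averages. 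Therefore the averaged inequality is exactly the constraint of (\ref{eq:dual21}) for $\overline v$, showing $(t^\star,\lambda^\star,\mu^\star)$ is feasible for (\ref{eq:dual21}) with the same objective value; this gives the $\le$ direction and hence equality of optimal values. Finally, $d^* \le p^*$ follows since (\ref{eq:dual21}) has optimal value equal to that of (\ref{eq:dual11}), which by Proposition~\ref{pro:dual} is a lower bound for the optimal value of (\ref{eq:opt1}), which in turn equals $p^*$ by the diagonal property in Proposition~\ref{pro:blossom}.

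The main obstacle is the averaging step: one must verify carefully that averaging the $\alpha$-term over an equivalence class vanishes. The cleanest way is the combinatorial count above — in a block of length $\delta_k$ with exactly $l_k(\overline v)$ entries equal to $\overline x_k$, the marginal distribution of each individual coordinate is the same (value $\overline x_k$ with probability $l_k(\overline v)/\delta_k$), so $\mathbb{E}[v_{k,l}] = \mathbb{E}[v_{k,l+1}]$ and thus $\mathbb{E}[e_{k,l}\cdot v] = 0$; summing over $k,l$ and using linearity kills the whole $\alpha$-contribution regardless of the value of $\alpha^\star$. Everything else is bookkeeping with the symmetry relations already recorded in the paper.
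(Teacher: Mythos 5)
Your proof is correct, but it takes a genuinely different route from the paper's. The paper passes to the LP duals of (\ref{eq:dual11}) and (\ref{eq:dual21}) — minimization problems over weights $y_v$ and $z_{\overline v}$ on the vertices — and transfers feasible points between them in both directions (aggregating $y_v$ into $z_{\overline v}=\sum_{v\in\overline v}y_v$ one way, and spreading uniformly via $y_v=z_{\overline v}/n(\overline v)$ the other way), invoking LP strong duality to conclude. You instead work directly on the two maximization LPs: the easy direction embeds the feasible set of (\ref{eq:dual21}) into that of (\ref{eq:dual11}) by setting $\alpha=0$, and the hard direction averages the constraints of (\ref{eq:dual11}) over each equivalence class, using the constancy of $q$, $a_i'\cdot v$, $c_j'\cdot v$ on classes together with the combinatorial identity $e_{k,l}\cdot\sum_{v\in\overline v}v=0$ (each coordinate within a block has the same marginal over the class). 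That identity is exactly the key fact the paper also uses, and your uniform-marginal argument for it is sound; in effect your constraint-averaging is the adjoint of the paper's uniform-spreading map. What your version buys is that it avoids writing down the dual LPs and invoking strong duality altogether, so it is somewhat more self-contained and elementary; the paper's version has the mild advantage of exhibiting the dual problems (\ref{eq:ddual1})--(\ref{eq:ddual2}) explicitly, which gives a primal interpretation of the relaxation as optimization over certain vertex measures. The concluding step $d^*\le p^*$ via Proposition~\ref{pro:dual} and the diagonal property of the blossom matches the paper. One cosmetic quibble: your opening description of (\ref{eq:dual21}) as a ``relaxation'' whose feasible set ``projects onto a larger set'' is misstated — what you actually prove is that its feasible set coincides with the projection of that of (\ref{eq:dual11}) onto the $(t,\lambda,\mu)$ coordinates — but the argument itself is unaffected.
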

The proof is stated in appendix. Let us highlight the gain of solving (\ref{eq:dual21}) in place of 
(\ref{eq:dual11}). Firstly, the decision variables $\alpha_{k,l}$ in problem (\ref{eq:dual11}) do not appear anymore
in (\ref{eq:dual21}). Secondly, the number of constraints indexed by $v'\in V'$ in (\ref{eq:dual11}) is $2^{\delta_1+\dots+\delta_n}$ whereas the number of constraints indexed by $\overline{v}\in \overline{V}'$
is only $(\delta_1+1)\times \dots \times (\delta_n+1)$. In fact, the linear program 
(\ref{eq:dual11}) has $m_I+m_J+(\delta_1-1)+\dots+(\delta_n-1)+1$ variables and $2^{\delta_1+\dots+\delta_n}+m_I$ inequality constraints whereas the linear program~(\ref{eq:dual21}) has only $m_I+m_J+1$ variables and $(\delta_1+1)\times \dots \times (\delta_n+1)+m_I$ inequality constraints.  
As for the computation of the values $q(\overline v)$, in order to keep the computational cost as lows as possible, 
one should avoid computing explicitly the blossom of $p$; it is better to use the method suggested by Proposition~\ref{pro:bernstein}.
This way, the overall cost of the optimization procedure will remain polynomial in the degrees of $p$, though exponential in the dimension of the state space $\R^n$.

\subsection{Sensitivity analysis} An interesting feature of Lagrangian duality is that it enables sensitivity analysis (see e.g.~\cite{boyd2004}). In this section, we are interested in analyzing the variations of the optimal value of (\ref{eq:opt}), or of its lower bound, 
under modifications of the polytope. This will be used in the next section for the computation of polytopic invariants for polynomial dynamical systems. More precisely, we consider the following variation of problem (\ref{eq:opt}):
\begin{equation}
\label{eq:optper}
\begin{array}{llr}
\text{min} & p(x)\\
\text{over} & x\in R, \\
\text{under} 
& a_i \cdot x \le b_i+\alpha_i, & i\in I, \\
& c_j \cdot x = d_j+\beta_j, & j\in J,
\end{array}
\end{equation}
where $\alpha_i \in \mathbb{R}$, for all $i \in I$ and $=\beta_j \in \mathbb{R}$ for all $j \in J$.
This problem coincides with the original problem (\ref{eq:opt}) when $\alpha=0$ and $\beta=0$.
We assume that problem (\ref{eq:optper}) is feasible as well. Let $p^*$ and $p^*(\alpha,\beta)$ denote the optimal values of problems (\ref{eq:opt}) and (\ref{eq:optper}), respectively.
Let $d^*$ and $d^*(\alpha,\beta)$ be the lower bounds of $p^*$ and $p^*(\alpha,\beta)$ obtained by application of Theorem~\ref{th:dual21}. The following result shows how the solution of (\ref{eq:dual21})
allows us to compute a lower bound of $d^*(\alpha,\beta)$ and thus of $p^*(\alpha,\beta)$.

\begin{theorem}
\label{th:sens}
Let $d^*$ and $(t^*,{\lambda}^*,{\mu}^*)$ be the optimal value and an optimal solution of the linear program (\ref{eq:dual21}).
Then, for all $\alpha\in \R^{m_I}$ and $\beta\in \R^{m_J}$, such that (\ref{eq:optper}) is feasible we have:
$
p^*(\alpha,\beta) \geq d^*(\alpha,\beta) \geq d^* - {\lambda^*}\cdot \alpha- {\mu^*}\cdot \beta. 
$
\end{theorem}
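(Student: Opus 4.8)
The plan is to exploit the fact that the perturbation in (\ref{eq:optper}) only shifts the right-hand sides $b_i$ and $d_j$, so a point that is feasible for the dual linear program (\ref{eq:dual21}) of the unperturbed problem can be turned into a feasible point for the dual of the perturbed problem at the cost of a controlled decrease of the objective. First I would write down the linear program obtained by applying Theorem~\ref{th:dual21} to (\ref{eq:optper}): it is exactly (\ref{eq:dual21}) with each $b_i$ replaced by $b_i+\alpha_i$ and each $d_j$ replaced by $d_j+\beta_j$, and by construction its optimal value is $d^*(\alpha,\beta)$. The crucial observation is that the coefficients $a_i'\cdot\overline v$, $c_j'\cdot\overline v$ and the values $q(\overline v)$ appearing in these constraints do not depend on $b_i$ or $d_j$, so only the constant terms change.

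Next, starting from the optimal solution $(t^*,\lambda^*,\mu^*)$ of (\ref{eq:dual21}), I would propose the candidate $(\tilde t,\lambda^*,\mu^*)$ for the perturbed dual, with $\tilde t = t^* - \lambda^*\cdot\alpha - \mu^*\cdot\beta$. The constraints $\lambda_i^*\ge 0$ are untouched, so the only thing to verify is the family of inequalities indexed by $\overline v\in\overline V'$. For each such $\overline v$, the right-hand side of the perturbed inequality equals the right-hand side of the original inequality minus $\lambda^*\cdot\alpha+\mu^*\cdot\beta$; since $(t^*,\lambda^*,\mu^*)$ is feasible for (\ref{eq:dual21}), the original right-hand side is at least $t^*$, whence the perturbed right-hand side is at least $t^*-\lambda^*\cdot\alpha-\mu^*\cdot\beta=\tilde t$. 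Thus $(\tilde t,\lambda^*,\mu^*)$ is feasible for the perturbed dual, and therefore $d^*(\alpha,\beta)\ge \tilde t = d^* - \lambda^*\cdot\alpha - \mu^*\cdot\beta$, using $t^*=d^*$.

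Finally, the inequality $p^*(\alpha,\beta)\ge d^*(\alpha,\beta)$ is precisely the weak-duality conclusion of Theorem~\ref{th:dual21} applied to (\ref{eq:optper}), which is legitimate because (\ref{eq:optper}) is assumed feasible. Chaining the two inequalities gives the claim. I do not expect a genuine obstacle: the argument is a direct feasible-point construction combined with weak duality, and the only point requiring a little care is the bookkeeping of the shifted constant terms in the $\overline v$-indexed constraints and the remark that everything else in (\ref{eq:dual21}) is invariant under the perturbation.
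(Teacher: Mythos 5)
Your proposal is correct and follows essentially the same route as the paper: apply Theorem~\ref{th:dual21} to the perturbed problem to get weak duality, then show that $(t^*-\lambda^*\cdot\alpha-\mu^*\cdot\beta,\lambda^*,\mu^*)$ is feasible for the perturbed dual LP because the constraints only shift by the constant $\lambda^*\cdot\alpha+\mu^*\cdot\beta$. Your write-up simply makes explicit the feasibility check that the paper dismisses with ``one can verify easily.''
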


\begin{proof} By applying Theorem~\ref{th:dual21} to (\ref{eq:optper}), we have that $d^*(\alpha,\beta)$ is the optimal value of
\begin{equation}
\label{eq:dual21per}
\begin{array}{lll}
\text{max} & t\\
\text{over} & t\in \R,\; \lambda\in \R^{m_I},\; \mu \in \R^{m_J},\\
\text{under} &\lambda_i \ge 0, & i\in I, \\ 
&t \le q(\overline{v})+\displaystyle{\sum_{i\in I} \lambda_i (a_i' \cdot \overline{v} - b_i-\alpha_i)}\\
& \hspace{0.5cm} + \displaystyle{\sum_{j\in J} \mu_j (c_j' \cdot \overline{v} - d_j-\beta_j)}, & \overline{v}\in \overline{V}'.
\end{array}
\end{equation}
The fact that $p^*(\alpha,\beta) \geq d^*(\alpha,\beta)$ is a consequence of Theorem~\ref{th:dual21}.
Let $(t^*,{\lambda}^*,{\mu}^*)$ be an optimal solution of the problem (\ref{eq:dual21}), 
one can verify easily that $(t^*- {\lambda^*}\cdot \alpha- {\mu^*}\cdot \beta, \lambda^*,\mu^*)$
is feasible for (\ref{eq:dual21per}). It follows that 
$d^*(\alpha,\beta) \geq t^* - {\lambda^*}\cdot \alpha- {\mu^*}\cdot \beta$ which leads to the expected inequality since $d^*=t^*$.
\end{proof}

\subsection{Examples and comparison}

Before using the method desribed previously for the verification and the computation of polytopic invariants for polynomial dynamical systems,
we provide a brief comparison with two existing relaxation methods for polynomial optimization:
the first one is the Reformulation Linearisation Technique (RLT) introduced by Sherali in~\cite{sherali91,sherali97} 
which is also based on linear programming; the second one was introduced by Lasserre in~\cite{lasserre2001} and uses ralexations uner the form of Linear Matrix Inequalities (LMI) which are solved using semi definite programming.
We compare the three methods using two different examples.

We first consider the following constrained 3-dimensional problem~\cite{sherali91}:
\begin{equation}
\label{eq:test2}
\begin{array}{lll}
\text{min} & x_1x_2x_3+{x_1}^2-2x_1x_2-3x_1x_3\\
           &+5x_2x_3-{x_3}^2+5x_2+x_3 \\
\text{over} & x=(x_1,x_2,x_3)\in [2,5]\times[0,10]\times[4,8], \\
\text{under} 
& 4x_1+3x_2+x_3 \le 20, \\
& x_1+2x_2+x_3 \ge 1.
\end{array}
\end{equation}
Optimal value and solution of this problem are $p^*=-119$ and  $x^*=({x_1}^*,{x_2}^*,{x_3}^*)=(3,0,8)$.
Characteristics of the three relaxtions methods are collected in the table in Figure~\ref{fig:tab3}.
\begin{figure}[htb]
\begin{center}

\begin{tabular}{|c|c|c|c|}
\hline
              & RLT & LMI  & Blossom \\ \hline
Constraints   & 56+2   & 10$\times$10+8(4$\times$4)&18+2\\
Variables     & 19 &34 &3\\ 
Optimal value &-120 &-119 & -120\\ 
CPU time (sec)    &0.049 & 0.548 &0.024 \\ \hline
\end{tabular} 
\end{center} 
\caption{\label{fig:tab3} Characteristics of the three methods for problem (\ref{eq:test2}).}
\end{figure}

We can see that our approach is the one leading to the simplest relaxation (problem with fewer variables and constraints).
The time needed to compute a lower bound of the optimal value is therefore less than for the other approaches.
The computed lower bound is the same than that computed by the RLT method but not as good as that computed by the LMI method
which finds the exact minimum. Though, on that example, the gap between the computed lower bounds by the three methods is small.
This is unfortunately not always the case as shown on the next example.

Let us consider the following unconstrained 1-dimensional problem~\cite{floudas92} (see also~\cite{sherali97}):
\begin{equation}
\label{eq:test1}
\begin{array}{lll}
\text{min} & y^4-3y^3-1.5y^2+10y\\
\text{over} & y\in [-5,5],
 \end{array}
\end{equation}
Optimal value and solution are $p^*=-7.5$ and  $y^*=-1$.
Characteristics of the three relaxation methods are collected in the table in Figure~\ref{fig:tab1}.
\begin{figure}[htb]
\begin{center}

\begin{tabular}{|c|c|c|c|}
\hline
              & RLT & LMI  & Blossom \\ \hline
Constraints   & 5   & 3$\times$3+2(2$\times$2)& 5\\
Variables     & 4& 4&1 \\ 
Optimal value &-908.3&-7.5&-837.5\\ 
CPU time (sec)    &0.014&0.246 &0.015\\ \hline
\end{tabular} 
\end{center} 
\caption{\label{fig:tab1} Characterisitcs of the three methods for problem (\ref{eq:test1}).}
\end{figure}


The lower bounds provided by the two methods based on linear programming are far from the optimal value.
Notice that for our approach this huge gap is typical from problems where the optimal solution is far from
the boundary of the optimization domain. In that case, coupling our approach with a domain decomposition method based on branch and bound 
as suggested in~\cite{sherali91} would strongly improve our optimal value.
In all cases, the LMI method gives better lower bounds but needs more computational resources.
As for the problem of computing invariants for polynomial systems, it appears, as shown in Section~\ref{sec:example}, that
our linear relaxations are often sufficient.

\section{Computation of Polytopic Invariants for Polynomial Systems}
\label{sec:inv}

In the following, we show how the results developed in the previous section can be used for the computation of polytopic invariants for polynomial  systems.
Let us consider the dynamical system (\ref{eq:ode}),
and let  $R=[\underline{x_1},\overline{x_1}] \times \dots \times [\underline{x_n},\overline{x_n}]$, with $\underline{x_k}<\overline{x_k}$ for all $k\in \{1,\dots, n\}$ be a rectangle of $\R^n$, delimiting a region of interest for studying the dynamics. Our goal is to compute a polytope $P\subseteq R$ invariant for~(\ref{eq:ode}).
To restrict the search space, we shall use parametrized
template expressions for $P$.
We will impose the orientation of the facets of polytope $P$ by choosing normal vectors 
in the set $\{a_k \in \R^n |\; k\in K\}$ where $K=\{1,\dots,m_K\}$ is a set of indices.
Then, polytope $P$ can be written under the form 
$$
P=\{x\in \R^n|\; a_k\cdot x \le b_k,\; \forall k\in K\}
$$
where the vector $b \in \R^{m_K}$, to be determined, specifies the position of the facets. The facets of $P$ are denoted by $F_k$ for $k\in K$, where
$$
F_k=\{x\in \R^n|\; a_k\cdot x =b_k, \text{ and } a_i\cdot x \le b_i,\; \forall i\in K\setminus \{k\}\}.
$$
The proposed approach for the computation of an invariant is iterative. Each iteration consists of two main steps.
First, we try to verify that the polytope $P$ is invariant for the dynamical system~(\ref{eq:ode}). If we fail to verify that $P$ is an invariant, we use sensitivity analysis to modify the vector $b$ (and thus $P$) in order to find an invariant polytope.

\subsection{Polytopic invariant verification}

As stated in the introduction, $P$ is an invariant set of the dynamical system~(\ref{eq:ode}) if and only if 
$$
\forall k\in K,\; \min_{x\in F_k} -a_k\cdot f(x) \ge 0.
$$
We assume that all the facets $F_k$ are not empty. 
Since for all $k\in K$, $F_k \subseteq P \subseteq R$, then this problem is equivalent to showing that the optimal values $p_k^*$ of the following optimization problems
are non-negative for all $k\in K$:
\begin{equation}
\label{eq:inv1}
\begin{array}{llr}
\text{min} & -a_k\cdot f(x)\\
\text{over} & x\in R, \\
\text{under} 
& a_i \cdot x \le b_i, & i\in K\setminus \{k\}, \\
& a_k \cdot x = b_k.
\end{array}
\end{equation}
Since $-a_k\cdot f$ is a multivariate polynomial, this problem is similar to (\ref{eq:opt}). Therefore, by application of Theorem~\ref{th:dual21}, we have the following result:
\begin{proposition}
\label{pro:inv}
For $k\in K$, let  $q_k$ be the blossom of the multivariate polynomial $-a_k\cdot f$ and
let $d_k^*$ be the optimal value of the linear program:
\begin{equation}
\label{eq:inv2}
\begin{array}{llr}
\text{max} & t\\
\text{over} & t\in \R,\; \lambda\in \R^{m_K},\\
\text{under} &\lambda_i \ge 0, & i\in K\setminus \{k\}, \\ 
&t \le q_k(\overline{v})+\displaystyle{\sum_{i\in K} \lambda_i (a_i' \cdot \overline{v} - b_i)}  & \overline{v}\in \overline{V}'.
\end{array}
\end{equation}
If for all $k\in K$, $d_k^*\ge 0$, then $P$ is an invariant polytope for dynamical system~(\ref{eq:ode}).
\end{proposition}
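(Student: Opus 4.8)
The plan is to reduce the statement directly to the invariance characterization~(\ref{eq:car2}) together with Theorem~\ref{th:dual21}, so that essentially nothing new has to be proved. First I would recall that, by the standard characterization recast in~(\ref{eq:car2}), the polytope $P$ is invariant for~(\ref{eq:ode}) if and only if $p_k^* \ge 0$ for every $k\in K$, where $p_k^*$ denotes the optimal value of~(\ref{eq:inv1}). Hence it suffices, for each $k$, to exhibit a certified lower bound of $p_k^*$ and to check that this bound is non-negative.

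Next I would observe that, for a fixed $k$, problem~(\ref{eq:inv1}) is precisely an instance of the generic problem~(\ref{eq:opt}): take $p = -a_k\cdot f$, let the rectangle be $R$, let the inequality constraints be $a_i\cdot x \le b_i$ indexed by $I = K\setminus\{k\}$, and let the single equality constraint be $a_k\cdot x = b_k$ (so $J=\{k\}$, with $c_k = a_k$, $d_k = b_k$). The feasibility hypothesis required in Section~\ref{sec:opt} is met here: we assumed every facet $F_k$ is non-empty, and since $F_k\subseteq P\subseteq R$ the feasible set of~(\ref{eq:inv1}) is exactly $F_k\neq\emptyset$. Theorem~\ref{th:dual21} therefore applies with $q_k$ the blossom of $-a_k\cdot f$, and yields $d_k^* \le p_k^*$, where $d_k^*$ is the optimal value of the linear program~(\ref{eq:dual21}) built from this data.

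It then remains to verify that the linear program~(\ref{eq:inv2}) in the statement is exactly that specialization of~(\ref{eq:dual21}). This is only a matter of renaming: the multiplier $\mu_1$ attached to the lone equality constraint $a_k\cdot x = b_k$ is written $\lambda_k$ in~(\ref{eq:inv2}), so the two sums $\sum_{i\in I}\lambda_i(a_i'\cdot\overline v - b_i) + \sum_{j\in J}\mu_j(c_j'\cdot\overline v - d_j)$ of~(\ref{eq:dual21}) collapse into the single sum $\sum_{i\in K}\lambda_i(a_i'\cdot\overline v - b_i)$ of~(\ref{eq:inv2}); correspondingly the sign constraint $\lambda_i\ge 0$ is imposed only for $i\in K\setminus\{k\}$, leaving $\lambda_k$ free, as is appropriate for the multiplier of an equality. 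With this identification~(\ref{eq:inv2}) coincides with~(\ref{eq:dual21}), so $d_k^* \le p_k^*$.

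Finally, assuming $d_k^* \ge 0$ for all $k\in K$, we get $p_k^* \ge d_k^* \ge 0$ for all $k$, and by~(\ref{eq:car2}) the polytope $P$ is invariant for~(\ref{eq:ode}), which is the claim. The proof is thus essentially immediate; the only point requiring a little care is the bookkeeping identification of~(\ref{eq:inv2}) with the specialized dual~(\ref{eq:dual21}) and the check that the feasibility hypothesis of Theorem~\ref{th:dual21} holds, which is exactly where the assumption that all facets $F_k$ are non-empty is used.
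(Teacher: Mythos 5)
Your proposal is correct and follows exactly the paper's own route: identify~(\ref{eq:inv1}) as an instance of~(\ref{eq:opt}) with $I=K\setminus\{k\}$ and the single equality $a_k\cdot x=b_k$, apply Theorem~\ref{th:dual21} to get $p_k^*\ge d_k^*$, and conclude from the characterization~(\ref{eq:car2}). The paper's proof is a one-liner; your extra bookkeeping (the renaming of the equality multiplier $\mu_k$ as the sign-free $\lambda_k$, and the use of non-empty facets for feasibility) just makes explicit what the paper leaves implicit.
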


\begin{proof} By applying Theorem~\ref{th:dual21} to problem (\ref{eq:inv1}), we obtain that $p_k^* \ge d_k^*$, for all $k\in K$. Then,  $d_k^*\ge 0$ implies that 
$p_k^*\ge 0$, for all $k\in K$, and therefore $P$ is an invariant polytope.
\end{proof}

\begin{remark}
The degrees of the multivariate polynomials $-a_k\cdot f$ may not be all the same. This results in vectors $a_i'$, and sets $\overline{V}'$ in problem~(\ref{eq:inv2}) that depend on the index $k\in K$. It is possible to avoid this by defining $q_k$ as $-a_k \cdot g$ where $g$ is the blossom of the polynomial vector field $f$ defined as follows. For  $ i,j \in \{1,\ldots,n\} $,
let $\delta_{ij}$ be the degree of $x_i$ in the multivariate polynomial $f_j(x)$ and let $\delta_i={\max_{j\in \{1,\ldots,n \} } \delta_{ij}}$.
It is possible to regard $f_j$ as a multivariate polynomial with degrees $\delta_1,\dots,\delta_n$ possibly with some zero coefficients and to define the associated blossom $g_j$ as defined in Definition~\ref{def:blossom}. Then, for $j\in \{1,\ldots,n\} $, $g_j$ are the components of the blossom 
$g: \R^{\delta_1+\dots+\delta_n} \rightarrow \R^n$ of the polynomial vector field $f$. For $k\in K$, $q_k=-a_k\cdot g$ are multi-affine functions defined on $\R^{\delta_1+\dots+\delta_n}$ with similar properties to the blossom of $-a_k\cdot f$ that can be used in problem~(\ref{eq:inv2}).
\end{remark}

Invariance of a polytope $P$ can be verified by solving a set of linear programs (one per facet of $P$). In the case when we fail to verify that the polytope is invariant, sensitivity analysis may help us in modifying it in order to find an invariant polytope for~(\ref{eq:ode}).

\subsection{Polytope modification using sensitivity analysis} The verification of the invariance of the polytope $P$ fails
if $d_k^*< 0$, for some $k\in K$. 
In that case, we would like to know how
to modify the vector $b$ (and thus $P$) in order to find an invariant polytope. 
For 
$\alpha\in R^{m_K}$, let $P_\alpha$ be the polytope given by
$$
P_\alpha=\{x\in \R^n|\; a_k\cdot x \le b_k+\alpha_k,\; \forall k\in K\}.
$$
For $\alpha=0$, we recover the polytope $P$, we would like to find $\alpha$ such that $P_\alpha$ is an invariant for~(\ref{eq:ode}). 
We impose additional constraints on $P_\alpha$:
\begin{itemize}
\item constraints of  form $b_k+\alpha_k \le \overline{b_k}$ ensures  $P_\alpha \subseteq R$;

\item constraints of  form $\underline{b_k} \le b_k+\alpha_k$ ensures $P_\alpha\ne \emptyset$;

\item $-\varepsilon \le \alpha_k \le \varepsilon$ ensures that $P_\alpha$ is close to $P$, where $\varepsilon$ is a parameter that can be tuned.
\end{itemize}

Denoting for $k\in K$, $d_k^*(\alpha)$ the optimal values of problems (\ref{eq:inv2}) for the polytope $P_\alpha$, the sensitivity analysis 
 in Theorem~\ref{th:sens} gives us 
$
d_k^*(\alpha) \ge d_k^* +\lambda_k^* \cdot \alpha
$,
for all $k\in K$. 
where $d_k^*$ and $(t_k^*,\lambda_k^*)$ are the optimal values and solutions of problems (\ref{eq:inv2}) for polytope $P$ and $k\in K$. 
Then, by Proposition~\ref{pro:inv}, for $P_\alpha$ to be an invariant polytope for dynamical system (\ref{eq:ode}), it is sufficient that for all $k\in K$, $d_k^* +\lambda_k^* \cdot \alpha \ge 0$. 
In order to find a suitable $\alpha$, we can solve the following problem:
$$
\begin{array}{llr}
\text{max} & \displaystyle{\min_{k\in K}    \left(d_k^* +\lambda_k^* \cdot \alpha\right)   }\\
\text{over} & \alpha \in \R^{m_K},\\
\text{under}
& \underline{\alpha_k} \leq \alpha_k \leq \overline{\alpha_k}, & k\in K
\end{array}
$$
where $\underline{\alpha_k}=\max(-\varepsilon, \underline{b_k}-b_k)$ and $\overline{\alpha_k}=\min(\varepsilon, \overline{b_k}-b_k)$.
This problem can be recasted as the following linear program:
\begin{equation}
\label{eq:inv3}
\begin{array}{llr}
\text{max} & t\\
\text{over} & t\in \R,\;  \alpha \in \R^{m_K},\\
\text{under}  
&t \le d_k^*-{\lambda_k}^*\cdot \alpha, &  k\in K,\\
& \underline{\alpha_k} \leq \alpha_k \leq \overline{\alpha_k}, & k\in K.
\end{array}
\end{equation}
Let $(t^*,\alpha^*)$ be an optimal solution of this linear program. If the optimal value is non-negative then it is sufficient to prove that $P_{\alpha^*}$ is an invariant for the dynamical system~(\ref{eq:ode}).
If the optimal value is strictly negative, then we go back to the verification stage with $P=P_{\alpha^*}$ and iterate the approach.

\begin{remark} Let us remark that the polytope $P_{\alpha^*}$ computed by solving (\ref{eq:inv3}) may have empty facets. This results, for the empty facet $F_k$, in an unbounded value $d_k^*(\alpha^*)=+\infty$. In order to avoid such situations, it is useful to replace $\alpha^*$ by $\tilde{\alpha}^*$ such that $P_{\tilde{\alpha}^*}$ has no empty facet and $P_{\alpha^*}= P_{\tilde{\alpha}^*}$ (see Figure~\ref{fig:constraints}). Again, this can be done by solving a set of linear programs. 
\end{remark}

\begin{figure}[!h]
\begin{center}
\input{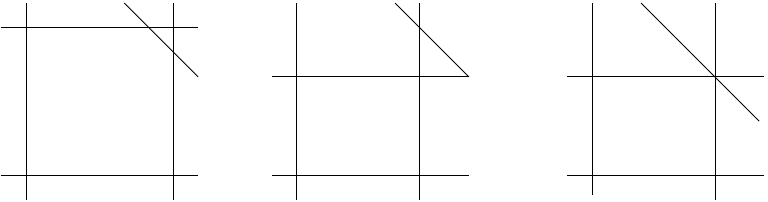_t}
\caption{The polytope $P_{\alpha^*}$ may have empty facets (center polytope), we replace $\alpha^*$ by $\tilde{\alpha}^*$ such that $P_{\tilde{\alpha}^*}$ has no empty facet and $P_{\alpha^*}= P_{\tilde{\alpha}^*}$ (right polytope). \label{fig:constraints}}
\end{center}
\end{figure}

\subsection{Related work}

Computation of invariants for polynomial dynamical systems is often approached using semi-definite programming via sum of squares relaxations.
Most of the literature on the subject deals with the computation of Lyapunov functions, whose level sets are invariant (see e.g.~\cite{jarvis2003,wang2005}). A similar approach for the computation of invariant sets that do not contain any stable equilibrium point can  be found in~\cite{Prajna07}.
In these works, the invariant sets are semi-algebraic sets described by polynomial inequalities. However, polytopes are easier to manipulate and as explained in~\cite{Alessio2007}, it is sometimes preferable to have invariants described by polytopes rather than semi-algebraic sets. In particular, polytopes with fixed facet directions given by a template has been shown very useful for the computation of invariant sets for hybrid systems with affine dynamics in~\cite{Sankaranarayanan2008} and for multiaffine systems in~\cite{Belta06,Abate09}. In some sense, our work builds on and extend these approaches to the class of polynomial dynamical systems.
  
It is worth mentioning the work on polyhedral Lyapunov functions for the class of dynamical systems described by linear differential inclusions. 
For this class of systems, it can be shown that existence of an invariant set containing the origin is equivalent to Lyapunov stability~\cite{blanchini99}.
Methods relying on linear programming, for computing polyhedral invariants for linear differential inclusions have been developed (see 
e.g.~\cite{blanchini95,polanski2000}). We would like to point out that our approach can be easily adapted for the computation of invariant sets for polynomial (and thus linear) differential inclusions. 
Hence, our approach can be seen as a generalization of the work mentioned above though the algorithms used for the computation of the polyhedral
invariants are quite different.


\section{Examples}\label{sec:example} We implemented our approach in Matlab; in the following, we show for a set of examples borrowed from biological applications, that our approach is effective in practice. All the reported computations take a few seconds.

\subsection{FitzHugh-Nagumo neuron model} 

We applied our approach to the FitzHugh-Nagumo model~\cite{Fitzhugh1961}, a polynomial dynamical system modelling the electrical activity of a neuron:
\begin{equation*}
\left\{
\begin{array}{rcll}
 \dot{x}_1 &=& x_1-x_1^3/3 - x_2 + I , \vspace{2mm}\\
 \dot{x}_2 &=& 0.08(x_1+0.7 - 0.8x_2),
\end{array}\right.
\end{equation*}
where model parameter $I$ is taken equal to $\frac{7}{8}$. This system is known to have a limit cycle. Using our approach, we synthesized an invariant polytope containing the limit cycle. Working in the rectangle $[-2.5,2.5]\times [-1.5, 3.5]$, we found an invariant polytope with $8$  facets with uniformly distributed orientations (see Figure~\ref{fig:ex1}). Starting from the set represented in dashed line, our approach needs $15$ iterations to find the invariant polytope depicted in plain line. We can check on the figure that it is effectively an invariant. Let us remark that this invariant polytope $P$ together with the existence of an unstable equilibrium inside $P$ provides
by application of the Poincar\'e Bendixon theorem a formal proof of the existence of a limit cycle inside the polytope $P$. 

At each iteration, we need to solve for invariant verification, $m_K=8$ linear programs of the form (\ref{eq:inv2}) with $m_K+1=9$ variables and $m_K-1+(\delta_1+1)\times(\delta_2+1)=15$ inequality constraints. For invariant synthesis using sensitivity analysis, we need to solve at each iteration $1$ linear program with $m_K+1=9$ variables and $2m_K=16$ inequality constraints.


\begin{figure}[!h]
\begin{center}
\includegraphics[angle=0,scale=0.4]{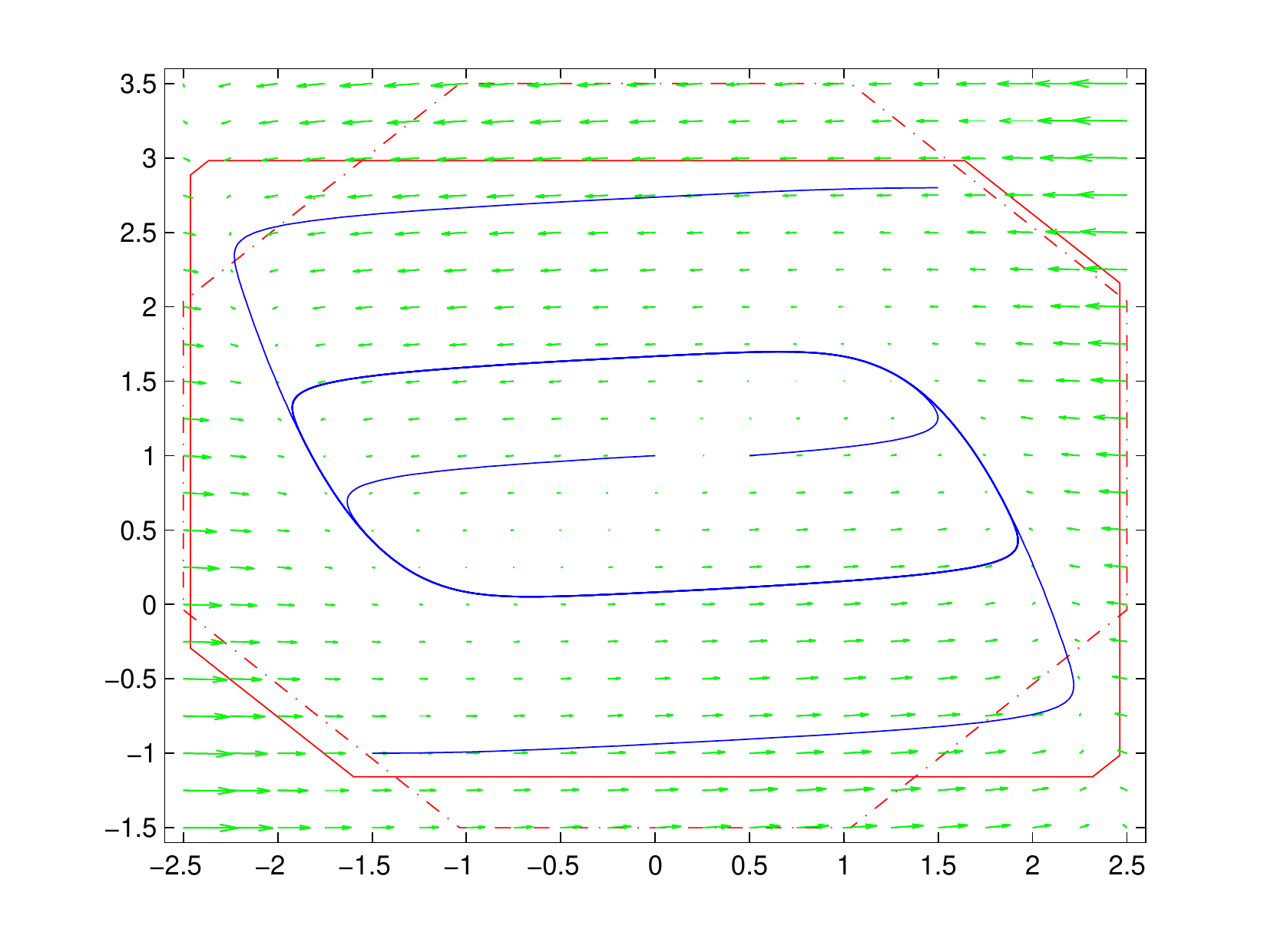}
\caption{Polytopic invariant for the FitzHugh-Nagumo model (represented in plain line) obtained after $15$ iterations starting from the dashed polytope. The computed invariant contains the limit cycle.
\label{fig:ex1}}
\end{center}
\end{figure}

\subsection{Phytoplankton growth model}
We now consider a model of Phytoplankton growth~\cite{Bernard2002}:
\begin{equation*}
\left\{
\begin{array}{rcll}
 \dot{x}_1 &=& 1-x_1-\frac{x_1x_2}{4}, \vspace{2mm}\\
 \dot{x}_2 &=& (2x_3-1)x_2,\vspace{2mm}\\
 \dot{x}_3 &=& \frac{x_1}{4}-2x_3^2.
\end{array}\right.
\end{equation*}
This system has a stable equilibrium. Using our approach, we synthesized an invariant polytope containing the equilibrium. Working  in the rectangle 
$[0,3]\times [-0.1,2] \times [0,0.6]$, we were able to find an invariant polytope with $m_K=18$ facets a regular octagon (see Figure~\ref{fig:ex3d}). 
Starting from the polytope represented in left part of the figure, our approach needs $11$ iterations to find the invariant polytope depicted in the right part of the figure. We can check on the figure that it is indeed an invariant. 

At each iteration, we need to solve for invariant verification, $m_K=18$ linear programs of the form (\ref{eq:inv2}) with $m_K+1=19$ variables and $m_K-1+(\delta_1+1)\times(\delta_2+1)\times(\delta_3+1)=29$ inequality constraints. For invariant synthesis using sensitivity analysis, we need to solve at each iteration $1$ linear program with $m_K+1=19$ variables and $2m_K=36$ inequality constraints.

\begin{figure}[!h]
\begin{center}
\includegraphics[angle=0,scale=0.4]{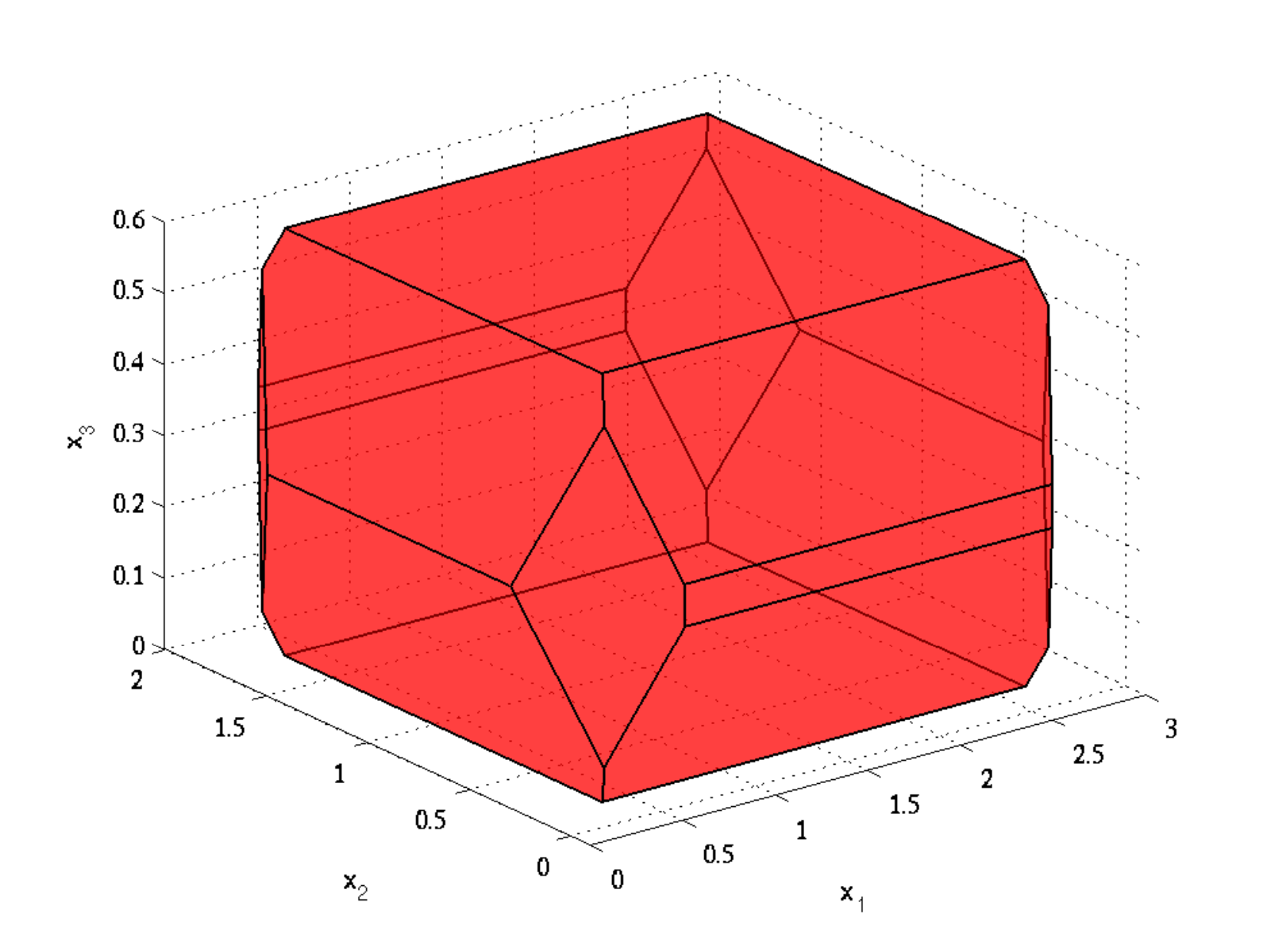}\\ \includegraphics[angle=0,scale=0.4]{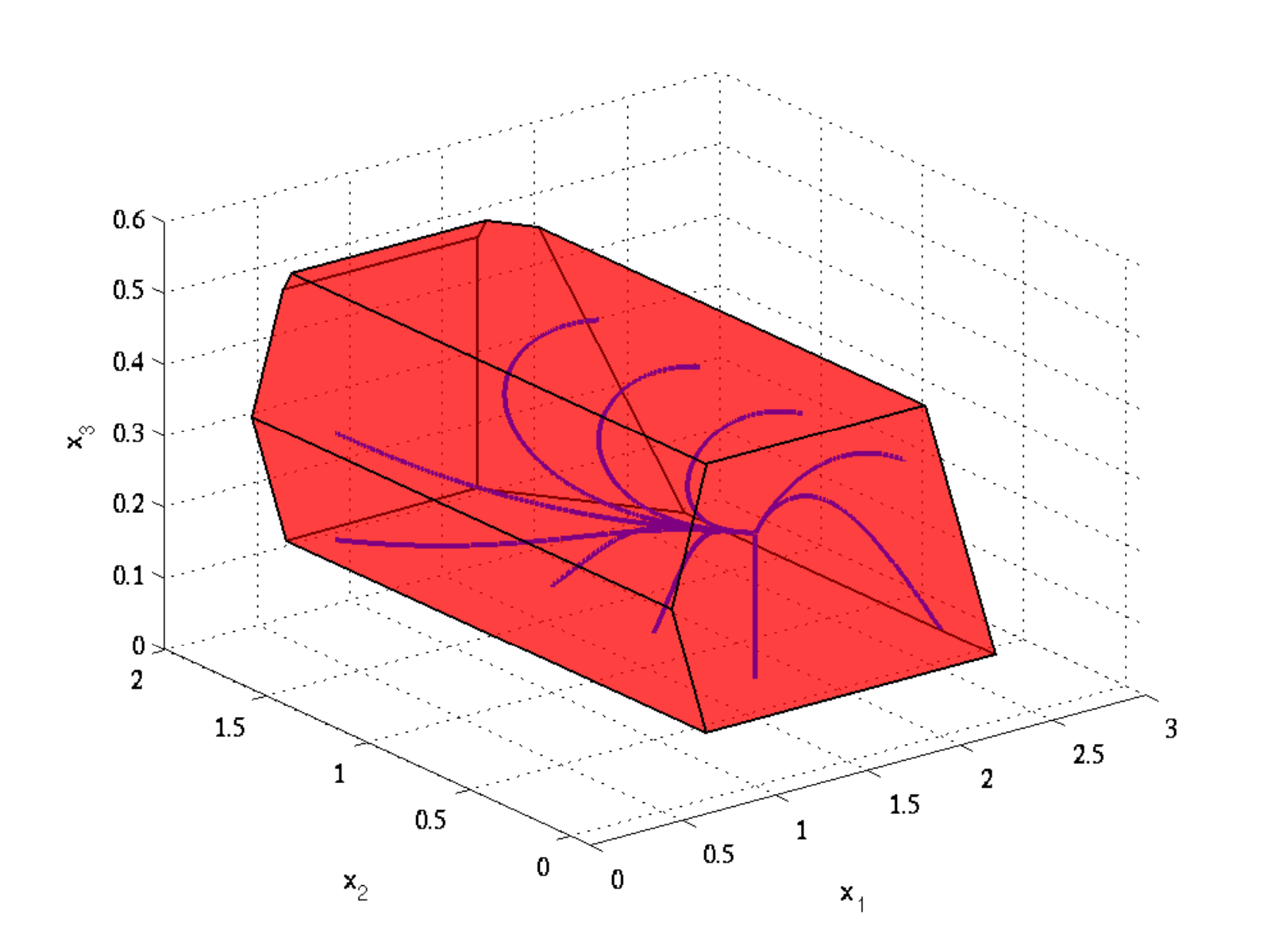}
\caption{Polytopic invariant for the Phytoplankton growth model (on the right) obtained after $11$ iterations starting from the polytope on the left.
\label{fig:ex3d}}
\end{center}
\end{figure}


\section{Conclusion}

In this paper, we have presented an approach based on linear programming for the computation of polytopic invariants for polynomial dynamical systems.
It uses the blossoming principle for polynomials, properties of multi-affine functions, Lagrangian duality and sensitivity analysis. Though our approach is conservative (we may fail to verify invariance of a polytope), we have shown on several examples that it can be useful in practical applications. 
In the future, we plan to use similar ideas for control synthesis for polynomial dynamical systems.


\bibliographystyle{alpha}
\bibliography{ref}

\begin{thebibliography}{ALBH07}

\bibitem[ALBH07]{Alessio2007}
A.~Alessio, M.~Lazar, A.~Bemporad, and W.P.M.H. Heemels.
\newblock Squaring the circle: An algorithm for generating polyhedral invariant
  sets from ellipsoidal ones.
\newblock {\em Automatica}, 43:2096--2103, 2007.

\bibitem[Alu11]{Alur2011}
R.~Alur.
\newblock Formal verification of hybrid systems.
\newblock In {\em International Conference on Embedded Software}, 2011.

\bibitem[ATS09]{Abate09}
A.~Abate, A.~Tiwari, and S.~Sastry.
\newblock Box invariance for biologically-inspired dynamical systems.
\newblock {\em Automatica}, 45(7):1601--1610, 2009.

\bibitem[Aub91]{aubin1991}
J.P. Aubin.
\newblock {\em Viability Theory}.
\newblock Birkhauser, 1991.

\bibitem[BG02]{Bernard2002}
O.~Bernard and J.-L. Gouze.
\newblock Global qualitative description of a class of nonlinear dynamical
  systems.
\newblock {\em Artificial Intelligence}, 136:29--59, 2002.

\bibitem[BH06]{Belta06}
C.~Belta and L.C.G.J.M. Habets.
\newblock Controlling a class of nonlinear systems on rectangles.
\newblock {\em IEEE Transactions on Automatic Control}, 51(11):1749--1759,
  2006.

\bibitem[Bla95]{blanchini95}
F.~Blanchini.
\newblock Nonquadratic lyapunov functions for robust control.
\newblock {\em Automatica}, 31(3):451--461, 1995.

\bibitem[Bla99]{blanchini99}
F.~Blanchini.
\newblock Set invariance in control.
\newblock {\em Automatica}, 35:1747--1777, 1999.

\bibitem[BV04]{boyd2004}
S.~Boyd and L.~Vandenberghe.
\newblock {\em Convex Optimization}.
\newblock Cambridge University Press, 2004.

\bibitem[Fit61]{Fitzhugh1961}
R.~FitzHugh.
\newblock Impulses and physiological states in theoretical models of nerve
  membrane.
\newblock {\em Biophysical J.}, 1:445--466, 1961.

\bibitem[JW03]{jarvis2003}
Z.~W. Jarvis-Wloszek.
\newblock {\em Lyapunov Based Analysis and Controller Synthesis for Polynomial
  Systems using Sum-of-Squares Optimization}.
\newblock PhD thesis, UC Berkeley, 2003.

\bibitem[Las01]{lasserre2001}
J.~B. Lasserre.
\newblock Global optimization with polynomials and the problem of moments.
\newblock {\em SIAM Journal of Optimization}, 11(3):796,817, 2001.

\bibitem[PC08]{Platzer}
A.~Platzer and E.M Clarke.
\newblock Computing differential invariants of hybrid systems as fixedpoints.
\newblock In {\em Computer Aided Verification}, volume 5123 of {\em LNCS},
  pages 176--189. Springer, 2008.

\bibitem[PJP07]{Prajna07}
S.~Prajna, A.~Jadbabaie, and G.~J. Pappas.
\newblock A framework for worst-case and stochastic safety verification using
  barrier certificates.
\newblock {\em IEEE Transactions on Automatic Control}, 52(8):1415--1429, 2007.

\bibitem[Pol00]{polanski2000}
A.~Polanski.
\newblock On absolute stability analysis by polyhedral lyapunov functions.
\newblock {\em Automatica}, 36(4):573--578, 2000.

\bibitem[San10]{Sankaranarayanan10}
S.~Sankaranarayanan.
\newblock Automatic invariant generation for hybrid systems using ideal fixed
  points.
\newblock In {\em Hybrid Systems: Computation and Control}, pages 221--230,
  2010.

\bibitem[Sas99]{Sastry}
S.~Sastry.
\newblock {\em Nonlinear Systems: Analysis, Stability and Control}.
\newblock Springer, 1999.

\bibitem[SDI08]{Sankaranarayanan2008}
S.~Sankaranarayanan, T.~Dang, and F.~Ivan\v{c}i\'{c}.
\newblock Symbolic model checking of hybrid systems using template polyhedra.
\newblock In {\em Tools and algorithms for the construction and analysis of
  systems}, pages 188--202. Springer-Verlag, 2008.

\bibitem[Sei93]{Seidel1993}
H.-P. Seidel.
\newblock An introduction to polar forms.
\newblock {\em IEEE Comput. Graph. Appl.}, 13:38--46, 1993.

\bibitem[ST91]{sherali91}
H.D. Sherali and C.H. Tuncbilek.
\newblock A global optimization algorithm for polynomial programming using a
  reformulation-linearization technique.
\newblock {\em Journal of Global Optimization}, 2:101--112, 1991.

\bibitem[ST97]{sherali97}
H.D. Sherali and C.H. Tuncbilek.
\newblock New reformulation-linearization/convexification relaxations for
  univariate and multivariate polynomial programming problems.
\newblock {\em Operation Research Letters}, 21:1,9, 1997.

\bibitem[Tar48]{tarski1948}
A.~Tarski.
\newblock {\em A Decision Method for Elementary Algebra and Geometry}.
\newblock University of California Press, 1948.

\bibitem[VF92]{floudas92}
V.~Visweswaran and C.A. Floudas.
\newblock Unconstrained and constrained global optimization of polynomial
  functions in one variable.
\newblock {\em J.Global Optim}, 2:73,99, 1992.

\bibitem[WLW05]{wang2005}
T.-C. Wang, S.~Lall, and M.~West.
\newblock Polynomial level-set methods for nonlinear dynamical systems
  analysis.
\newblock In {\em Allerton Conference on Communication, Control and Computing},
  page 640–649, 2005.

\end{thebibliography}

\section*{Appendix - Proof of Theorem~\ref{th:dual21}}

Let ${d}^*_1$ be the optimal value of (\ref{eq:dual11}) and $d_2^*$ the optimal value
of (\ref{eq:dual21}), we want to show that $d_1^*=d_2^*$. 
We start by remarking that (\ref{eq:dual11}) and (\ref{eq:dual21}) are linear programs therefore
their optimal values are equal to that of their dual problems.
One can show verify the dual
of (\ref{eq:dual11}) is
\begin{equation}
\label{eq:ddual1}
\begin{array}{lll}
\text{min} & \displaystyle{\sum_{v\in V'} y_v q(v)} \\
\text{over} & y \in \R^{2^{\delta_1+\dots+\delta_n}},  \\
\text{under} & y_v \ge 0, & v\in V', \\
&  \displaystyle{\sum_{v\in V'} y_v = 1},\\
&  \displaystyle{a_i' \cdot  \sum_{v\in V'} y_v v \le b_i},& i\in I, \\
&  \displaystyle{c_j' \cdot \sum_{v\in V'} y_v v   = d_j},& j\in J, \\
&  \displaystyle{ e_{k,l} \cdot \sum_{v\in V'} y_v v   = 0},& k\in\{1,\dots,n\},\\
& & l\in \{1,\dots,\delta_k-1\}. 
\end{array}
\end{equation}
Similarly, the dual of (\ref{eq:dual21}) is
\begin{equation}
\label{eq:ddual2}
\begin{array}{llr}
\text{min} & \displaystyle{\sum_{\overline{v}\in \overline{V}'} z_{\overline{v}} q(\overline{v})} \\
\text{over} & z \in \R^{(\delta_1+1)\times \dots \times (\delta_n+1)},  \\
\text{under} & z_{\overline{v}} \ge 0, & \overline{v}\in \overline{V}', \\
&  \displaystyle{\sum_{\overline{v}\in \overline{V}'} z_{\overline{v}} = 1},\\
&  \displaystyle{a_i' \cdot  \sum_{\overline{v}\in \overline{V}'} z_{\overline{v}} \overline{v} \le b_i},& i\in I, \\
&  \displaystyle{c_j' \cdot \sum_{\overline{v}\in \overline{V}'}  z_{\overline{v}} \overline{v}   = d_j},& j\in J. 
\end{array}
\end{equation}
We first show that $d_2^* \le d_1^*$. Let $y\in \R^{2^{\delta_1+\dots+\delta_n}}$ be a feasible point
for problem (\ref{eq:ddual1}) such that ${d}_1^*={\sum_{v\in V'} {y_v}q(v)}$.
For $\overline{v} \in \overline{V}'$, let $z_{\overline{v}}={\sum_{v\in \overline{v}}y_v}$, it is clear that $z_{\overline{v}} \ge 0$. Further,
$$
\sum_{\overline{v}\in \overline{V}'} z_{\overline{v}} = \sum_{\overline{v}\in \overline{V}'}
\sum_{v\in \overline{v}}y_v = \sum_{v\in V'}y_v=1,
$$
$$
a_i' \cdot  \sum_{\overline{v}\in \overline{V}'} z_{\overline{v}} \overline{v} =
 \sum_{\overline{v}\in \overline{V}'} \sum_{v\in \overline{v}}y_v (a_i'\cdot \overline{v}) = 
 \sum_{{v}\in {V'}} y_{{v}} (a_i'\cdot {v}) \le b_i,
$$
and
$$
c_j' \cdot  \sum_{\overline{v}\in \overline{V}'} z_{\overline{v}} \overline{v} =
 \sum_{\overline{v}\in \overline{V}'} \sum_{v\in \overline{v}}y_v (c_j'\cdot \overline{v}) = 
 \sum_{{v}\in {V'}} y_{{v}} (c_j'\cdot {v}) = d_j.
$$
Therefore, $z$ is feasible for problem (\ref{eq:ddual2}).
Finally, since for all $v \in \overline{v}$, $q(v)=q(\overline{v})$, it follows that
$$
\sum_{\overline{v}\in \overline{V}'} z_{\overline{v}} q(\overline{v}) =
 \sum_{\overline{v}\in \overline{V}'} \sum_{v\in \overline{v}}y_v q(\overline{v}) = 
 \sum_{{v}\in {V'}} y_{{v}} q(v) = d_1^*.
$$
Therefore, $d_2^* \le d_1^*$. 
We now show that $d_1^* \le d_2^*$.
Let $z \in \R^{(\delta_1+1)\times \dots \times (\delta_n+1)}$ be a feasible point
for problem (\ref{eq:ddual2}) such that $d_2^*={\sum_{\overline{v}\in \overline{V}'} z_{\overline{v}} q(\overline{v})}$. Let $n(\overline{v})$ denote the number of vertices $v \in \overline v$, 
then for all $v \in \overline{v}$, let $y_v=z_{\overline{v}}/n(\overline{v})$.
It is clear $y_v\ge 0$ and
$$
\sum_{v\in V'}y_v = 
\sum_{\overline{v}\in \overline{V}'} \sum_{v\in \overline{v}} y_v =
 \sum_{\overline{v}\in \overline{V}'} z_{\overline{v}}
=1.
$$
We also have that
\begin{eqnarray*}
a_i' \cdot  \sum_{v\in V'} {y_v} v&= &\sum_{\overline{v}\in \overline{V}'} \sum_{v\in \overline{v}}
{y_v} (a_i' \cdot v) = \sum_{\overline{v}\in \overline{V}'} \sum_{v\in \overline{v}}
\frac{z_{\overline{v}}}{n(\overline{v})} (a_i' \cdot \overline{v})\\& =&
\sum_{\overline{v}\in \overline{V}'} z_{\overline{v}} (a_i' \cdot \overline{v}) \le b_i,
\end{eqnarray*}
and similarly we can show
$
c_j' \cdot  \sum_{v\in V'} {y_v} v = d_j.
$
%
Further, 
\begin{eqnarray*}
e_{k,l} \cdot \sum_{v\in V'} {y_v} v &= &\sum_{\overline{v}\in \overline{V}'} e_{k,l} \cdot \sum_{v\in \overline{v}}
\frac{z_{\overline{v}}}{n(\overline{v})} v \\&=&
\sum_{\overline{v}\in \overline{V}'} \frac{z_{\overline{v}}}{n(\overline{v})}\left( e_{k,l} \cdot \sum_{v\in \overline{v}} 
 v \right).
\end{eqnarray*}
By remarking, that for all $\overline{v} \in \overline{V}'$,  
$e_{k,l} \cdot \sum_{v\in \overline{v}} v = 0$, it follows that 
$e_{k,l} \cdot \sum_{v\in V'} {y_v} v = 0$.
Therefore, $y$ is feasible for problem (\ref{eq:ddual1}).
Finally,
$$
\sum_{{v}\in {V'}} y_{{v}} q(v) = 
 \sum_{\overline{v}\in \overline{V}'} \sum_{v\in \overline{v}} \frac{z_{\overline{v}}}{n(\overline{v})} q(\overline{v}) = 
\sum_{\overline{v}\in \overline{V}'} z_{\overline{v}} q(\overline{v}) = d_2^*.
$$
This proves that $d_1^* \le d_2^*$ and $d^*=d_1^*=d_2^*$. The fact that $d^* \le p^*$ is a consequence of Proposition~\ref{pro:dual}.

\end{document}